\newtheorem{lemma}{Lemma}
\newtheorem{remark}{Remark}
\newtheorem{theorem}{Theorem}
\begin{document}

\begin{frontmatter}



\title{Uniform convergence of optimal order under a balanced norm of a local discontinuous Galerkin method on a Shishkin mesh}

\tnotetext[funding]{
The current research  was partly supported by NSFC (11771257), Shandong Provincial NSF (ZR2021MA004).
}

\author[label1] {Jin Zhang \corref{cor1}}
\author[label1] {Wenchao Zheng \fnref{cor2}}
\cortext[cor1] {Corresponding author:  jinzhangalex@hotmail.com }
\fntext[cor2] {Email: Superwenchao@hotmail.com}

\address[label1]{School of Mathematical Sciences, Shandong Normal University, Jinan 250014, Shandong Province, PR China}

\begin{abstract}
For singularly perturbed reaction-diffusion problems in 1D and 2D, we study a local discontinuous Galerkin (LDG) method on a Shishkin mesh. In these cases, the standard energy norm is too weak to capture adequately the behavior of the boundary layers that appear in the solutions. To deal with this deficiency, we introduce a  balanced norm stronger than the energy norm. In order to achieve optimal convergence under the balanced norm in one-dimensional case, we design novel numerical fluxes and propose a special interpolation that consists of a Gauss-Radau projection and a local $L^2$ projection. Moreover, we generalize the numerical fluxes and interpolation, and extend convergence analysis of  optimal order from 1D to 2D. Finally, numerical experiments are presented to confirm the theoretical results.
\end{abstract}

\begin{keyword}
Singularly perturbed \sep Local discontinuous Galerkin\sep Shishkin mesh\sep  Balanced norm 
\end{keyword}

\end{frontmatter}


%
%
%

\section{Introduction}
Singularly perturbed problems appear widely in many fields, such as fluid mechanics, electronic science, and energy development \cite{B2007-Singularly,V2005-Methods}. The typical characteristic of their solutions  is the presence of layers. 
To fully resolve layers and obtain uniform convergence with respect to singular perturbation parameters, layer-adapted meshes have been introduced since 1960s \cite{B1969-Towards} and have been an active research field \cite{Lin2010-Layer,R2019-Layer}.
Among them, the Shishkin meshes  have been widely used and analyzed \cite{miller1996-fitted,l2001-numerical,r1998-grids},  because it has a very simple structure.


On Shishkin meshes, standard numerical methods such as finite element method and finite difference method have been well analyzed \cite{R2008-Robust}. These methods cannot compute the gradient of the solution accurately.  However, in many physical applications, the gradient 
 is also very important because it is related to some important physical quantities, such as surface friction and temperature and so on. To meet this requirement,  the local discontinuous Galerkin (LDG) method \cite{C1998-local} is a good candidate.
The LDG method,  as a kind of discontinuous Galerkin (DG) method, inherits many advantages of the DG method \cite{C2012-Discontinuous}, such as good stability, high-order accuracy and flexibility on $hp$ adaptivity. These advantages make the LDG method more attractive to deal with those solutions with  layers. 
In recent years, the LDG method has been used to singularly perturbed problems; see \cite{2009-numerical,Z2013-Convergence,z2013-point,z2011-uniformly}. 

In this manuscript, we focus on singularly perturbed reaction-diffusion equations. For these problems, Mei et al. \cite{m2021-convergence} and Cheng  \cite{c2022-balanced} have analyzed uniform convergence   under an energy norm and a balanced norm, respectively. It should be mentioned that the balanced norm is more appropriate for layers than the energy norm. 
 In \cite{c2022-balanced},  the authors  proved a uniform convergence of optimal order if  the smooth part of the solution belongs to the finite element space. However, 
 this condition is too strong to hold in general.


We aim to design a LDG method, which does not have the above drawback.
For this aim,  we have carefully designed a new numerical flux, which is the key to our LDG method.  Besides, we propose a special interpolation that consists of a Gauss-Radau projection and a local $L^2$ projection. On the basis of the numerical fluxes and the interpolation, we prove that the LDG solution is convergent under the balanced norm with optimal order.   Furthermore, we extend the LDG method and its convergence analysis from 1D to 2D.

The content of this paper is divided into the following parts. In Section 2, we prove the uniform convergence of the LDG method for a one-dimensional singularly perturbed problem. Firstly,  we design a specific numerical flux and introduce a LDG method.  Then the Shishkin mesh is defined and interpolations are presented. Finally, we get the optimal convergence order under the balanced norm. In Section 3, we prove the uniform convergence of the LDG method for a two-dimensional singularly perturbed problem, and the content distribution is similar to that in Section 2. In Section 4, we present numerical experiments to support the theoretical results. Throughout this article, $C$ denotes a general positive constant that is independent of $\varepsilon$ and $N$, where $\varepsilon$ is a perturbation parameter and $N$ is the number of meshes.

%
%

\section{LDG method for one-dimensional case}
We consider the following one-dimensional singularly perturbed problem:
\begin{equation}\label{eq1.1}
-\varepsilon u''+b(x)u=f(x),\quad \text{$x\in \Omega=(0,1)$},\quad u(0)=u(1)=0,
\end{equation}
where $0< \varepsilon\ll 1$ is a perturbation parameter, $b(x)\geqslant \beta^2 \ge 0$ and $f(x)$ are sufficiently smooth functions on $\bar{\Omega}$. And the solution of problem 
\eqref{eq1.1} is typically characterized by boundary layers of width $O(\sqrt{\varepsilon}\ln(1/\varepsilon))$ at $x=0$ and $x=1$.

\subsection{LDG method}
We divide the domain $\Omega=(0,1)$ into $N$ cells: $$0=x_0<x_1<\ldots<x_{N-1}<x_N=1.$$
Denote an arbitrary subinterval $[x_{j-1}, x_{j}]$ by $I_j$ for $j=1,\ldots,N$, and denote the step size of the subinterval as $h_j = x_{j} -x_{j-1}$. The discontinuous finite element space $V_N$ is defined as $$V_{N}=\{v\in L^2(\Omega):\text{$v|_{I_j}\in \mathbb{P}_k(I_j), 1\le j\le N$ } \},$$ where $\mathbb{P}_k(I_j)$ is the space of polynomials of degree at most $k\ge 1$ on $I_j$.
For $v\in V_N$, we define $v^{\pm}_j=\lim_{x\to x^{\pm}_j}v(x)$. Define a jump as: $[[v]]_j=v^{-}_j-v^{+}_j$ for $j=1,2,\ldots,N-1$, $[[v]]_0=-v^{+}_0$ and $[[v]]_N=v^{-}_N$.

Next, we will introduce the LDG method   for problem \eqref{eq1.1}. First, one has
\begin{equation*}
\begin{aligned}
\varepsilon^{-1}q=u',\quad \text{in $\Omega$}, \\
-q'+b(x)u=f(x),\quad \text{in $\Omega$}.
\end{aligned}
\end{equation*}
Define $\chi=(r,v)\in V_N \times V_N$ to be any test function. Let $\left< \cdot,\cdot\right>_{I_j}$ be the inner product in $L^2(I_j)$.
Find the numerical solution $W=(Q, U)\in V_N \times V_N$ such that the following variational forms hold in each element $I_j$,
\begin{equation}
\begin{aligned}\label{method:1}
&\varepsilon^{-1}\left<Q,r\right>_{I_j}+\left<U,r'\right>_{I_j}-\hat{U}_j r^{-}_j+\hat{U}_{j-1}r^{+}_{j-1}=0,\\
&\left<Q,v'\right>_{I_j}+\left<bU,v\right>_{I_j}-\hat{Q}_jv^{-}_j+\hat{Q}_{j-1}v^{+}_{j-1}=\left<f,v\right>_{I_j},
\end{aligned}
\end{equation}
 where the numerical fluxes $\hat{Q}$ and $\hat{U}$ are defined by
\begin{equation*}
\begin{aligned}
&\hat{Q}_j=
\left\{
\begin{aligned}
& Q^{+}_0+\lambda_0U^{+}_0, \quad &&\text{$j=0$},\\
& Q^{+}_j, \quad &&\text{$j=1,2,\ldots,N-1$},\\
& Q^{-}_N-\lambda_NU^{-}_N, \quad &&\text{$j=N$},
\end{aligned}
\right. \label{eq:Bakhvalov mesh-Roos}\\
&\hat{U}_j=
\left\{
\begin{aligned}
& 0, \quad &&\text{$j=0,N$},\\
&U_{\frac{3}{4}N}^{-}+\lambda_q[[Q]]_{\frac{3}{4}N},\quad &&\text{$j=\frac{3}{4}N$},\\
& U^{-}_j, \quad &&\text{$j=1,2,\ldots,\frac{3}{4}N-1,\frac{3}{4}N+1,\ldots,N-1$},\\
\end{aligned}
\right.\\
\end{aligned}
\end{equation*}
with the stabilization parameters $\lambda_0=\lambda_N=\sqrt{\varepsilon}$ and $\lambda_q=1/\sqrt{\varepsilon}$.
\begin{remark}
The definition of the numerical flux $\hat{U}_j$ is the key to convergence analysis under the balanced norm. Compared with \cite{c2022-balanced}, there is an additional term involved with the jump of $Q_{\frac{3}{4}N}$ appearing in the definition of $\hat{U}_{\frac{3}{4}N}$. With this new numerical flux, we can get optimal convergence order without the impossible condition in \cite{c2022-balanced}.
\end{remark}

We 
denote $\left<w,v\right>=\sum_{j=1}^N\left<w,v\right>_{I_j}$ and 
express the scheme \eqref{method:1} by a compact form: Find  $W=(Q,U)\in V_N\times V_N$ such that
\begin{equation*}
B(W;\chi)=\left<f,v\right>,\quad \forall \chi=(r,v)\in V_N\times V_N,
\end{equation*}
where
\begin{equation}\label{method:2}
\begin{aligned}
B(W;\chi)=&\left<b U,v\right>+\varepsilon^{-1}\left<Q,r\right>+\left<U,r'\right>+\sum_{j=1}^{N-1}U^{-}_j[[r]]_j+\left<Q,v'\right>+\sum_{j=0}^{N-1}Q^{+}_j[[v]]_j\\
&-(Q v)_N^{-}+\sum_{j\in \{0,N\}}\lambda_j [[u]]_j[[v]]_j+\lambda_q[[Q]]_{3N/4}[[r]]_{3N/4}.
\end{aligned}
\end{equation}

\subsection{Regularity of the solution}
\begin{lemma}\label{lem1}
(Referene \cite{miller1996-fitted}) For $j=0,1,\dots,k+2$, assume that the solution $u$ of \eqref{eq1.1} can be decomposed as $u(x)=\bar{u}(x)+u_{\varepsilon,1}(x)+u_{\varepsilon,2}(x)$ with $x\in \bar{\Omega}$, then
\begin{equation}\label{solution:1}
|\frac{d^j \bar{u}(x)}{d x^j}|\le C,\quad |\frac{d^j u_{\varepsilon,1}(x)}{d x^j}|\le C\varepsilon^{-\frac{j}{2}}e^{-\frac{\beta x}{\sqrt{\varepsilon}}},\quad |\frac{d^j u_{\varepsilon,2}(x)}{d x^j}|\le C\varepsilon^{-\frac{j}{2}}e^{-\frac{\beta (1-x)}{\sqrt{\varepsilon}}}.
\end{equation}
 Consequently, for $j=0,1,\dots,k+1$, $q=\bar{q}+q_{\varepsilon,1}+q_{\varepsilon,2}=\varepsilon \frac{d\bar{u}}{dx}+\varepsilon \frac{d u_{\varepsilon,1}}{dx}+\varepsilon \frac{d u_{\varepsilon,2}}{dx}$ and
\begin{equation}\label{solution:2}
|\frac{d^j \bar{q}(x)}{d x^j}|\le C\varepsilon,\quad |\frac{d^j q_{\varepsilon,1}(x)}{d x^j}|\le C\varepsilon^{-\frac{j-1}{2}}e^{-\frac{\beta x}{\sqrt{\varepsilon}}},\quad |\frac{d^j q_{\varepsilon,2}(x)}{d x^j}|\le C\varepsilon^{-\frac{j-1}{2}}e^{-\frac{\beta (1-x)}{\sqrt{\varepsilon}}}.
\end{equation}
\end{lemma}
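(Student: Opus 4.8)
Because Lemma~\ref{lem1} is quoted from \cite{miller1996-fitted}, the honest ``proof'' is a citation; nevertheless the standard way to establish such a Shishkin decomposition, which I would follow, is to construct the three pieces explicitly and then bound each by the maximum principle together with suitable barrier functions. Write $L := -\varepsilon \frac{d^2}{dx^2} + b(x)$. Since $b \ge \beta^2 \ge 0$, the operator $L$ obeys a comparison (maximum) principle on $\bar\Omega$, and this is the engine behind every estimate below. The overall plan is: (i) define the smooth part $\bar u$ as a truncated asymptotic series, (ii) define the layer correctors $u_{\varepsilon,1},u_{\varepsilon,2}$ so that $u=\bar u+u_{\varepsilon,1}+u_{\varepsilon,2}$, and (iii) bound $\bar u$ termwise and the correctors by exponential barriers.

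First I would build $\bar u = \sum_{i=0}^{M}\varepsilon^i v_i$, where $v_0 = f/b$ solves the reduced equation and the higher terms satisfy the recursion $b\,v_i = v_{i-1}''$. Each $v_i$ inherits $\varepsilon$-uniform bounds on its derivatives from the smoothness of $b,f$ and from $b\ge\beta^2>0$, so $|\bar u^{(j)}|\le C$ as claimed. A short computation shows $L\bar u - f = -\varepsilon^{M+1} v_M''$, so choosing $M$ large enough relative to $k$ makes this residual negligible for the required range $j\le k+2$.

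Next I would define the correctors $u_{\varepsilon,1},u_{\varepsilon,2}$ as the solutions of $Lw=0$ carrying the boundary data left over by $\bar u$ at $x=0$ and $x=1$, so that the full sum satisfies \eqref{eq1.1} up to a controllable remainder. The pointwise exponential decay follows by comparison with the barrier $B(x)=C e^{-\beta x/\sqrt\varepsilon}$: since $LB = C(b-\beta^2)e^{-\beta x/\sqrt\varepsilon}\ge 0$, matching the boundary values and invoking the maximum principle yields $|u_{\varepsilon,1}(x)|\le C e^{-\beta x/\sqrt\varepsilon}$, and symmetrically $|u_{\varepsilon,2}(x)|\le C e^{-\beta(1-x)/\sqrt\varepsilon}$. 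The derivative estimates are the crux of the argument. I would obtain them by passing to the stretched variable $\xi = x/\sqrt\varepsilon$, in which the layer problem becomes $\varepsilon$-uniformly well posed and each differentiation in $x$ contributes exactly one factor $\varepsilon^{-1/2}$; equivalently, one bootstraps from $u_{\varepsilon,1}'' = \varepsilon^{-1}(b\,u_{\varepsilon,1}-\cdots)$, bounding higher derivatives by lower ones and propagating the factor $\varepsilon^{-1/2}$ at each step to reach $|u_{\varepsilon,1}^{(j)}|\le C\varepsilon^{-j/2}e^{-\beta x/\sqrt\varepsilon}$ for $j\le k+2$. Tracking these $\varepsilon$-powers carefully, and controlling the interaction between the two correctors and the residual, is the only delicate point; the rest is routine.

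Finally, the bounds on $q=\varepsilon u'$ in \eqref{solution:2} are immediate consequences of \eqref{solution:1} via $\bar q = \varepsilon\bar u'$ and $q_{\varepsilon,i}=\varepsilon u_{\varepsilon,i}'$. Indeed $|\bar q^{(j)}| = \varepsilon|\bar u^{(j+1)}|\le C\varepsilon$, while $|q_{\varepsilon,1}^{(j)}| = \varepsilon|u_{\varepsilon,1}^{(j+1)}|\le C\varepsilon\cdot\varepsilon^{-(j+1)/2}e^{-\beta x/\sqrt\varepsilon} = C\varepsilon^{-(j-1)/2}e^{-\beta x/\sqrt\varepsilon}$, and analogously for $q_{\varepsilon,2}$, which is exactly the asserted estimate.
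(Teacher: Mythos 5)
The paper gives no proof of this lemma beyond the citation to \cite{miller1996-fitted}; the only step it actually derives is the passage from \eqref{solution:1} to \eqref{solution:2}, which you carry out correctly: $q_{\varepsilon,1}^{(j)}=\varepsilon\, u_{\varepsilon,1}^{(j+1)}$ costs $\varepsilon\cdot\varepsilon^{-(j+1)/2}=\varepsilon^{-(j-1)/2}$, and the index range $j\le k+1$ matches the range $j+1\le k+2$ available from \eqref{solution:1}. Your reconstruction of the decomposition itself---truncated asymptotic expansion for the smooth part, layer correctors dominated by the barrier $B=Ce^{-\beta x/\sqrt{\varepsilon}}$ (which works since $-\varepsilon B''+bB=(b-\beta^{2})B\ge 0$), and derivative bounds via the stretched variable---is precisely the standard argument of the cited reference, so your route coincides with the paper's source; the one caveat is that the construction tacitly requires $\beta>0$ (e.g.\ for $v_0=f/b$ and for the decay rate to be meaningful), so the paper's stated hypothesis $b\ge\beta^{2}\ge 0$ should be read as $b\ge\beta^{2}>0$, consistent with the 2D section.
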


\subsection{Shishkin mesh}
In this section, we will introduce a Shishkin mesh. 
Let $N\ge4$ be a multiple of 4
and the transition parameter be 
\begin{align*}
\tau=\frac{\sigma \sqrt{\varepsilon}\ln N}{\beta}\le \frac{1}{4},
\end{align*}
where $\sigma \ge k+1$ is a user-chosen parameter. 
Divide  $[\tau, 1-\tau]$ into $N/2$  equidistant subintervals. Furthermore, there are N/4 subintervals evenly distributed in both the intervals $[0, \tau ]$ and $[1-\tau, 1]$.
Then, the mesh points are given by
\begin{equation}\label{mesh:1}
x_j=
\left\{
\begin{split}
& 4\frac{\sigma \sqrt{\varepsilon} }{\beta} t_j\ln N, \quad &&\text{for $j=0,1,\ldots,\frac{N}{4}$},\\
&\tau+2(1-2\tau)(t_j-\frac{1}{4}),  &&\text{for $j=\frac{N}{4}+1,\ldots,\frac{3N}{4}$},\\
&1-4\frac{\sigma \sqrt{\varepsilon} }{\beta} (1-t_j)\ln N, \quad &&\text{for $j=\frac{3N}{4}+1,\ldots,N$},
\end{split}
\right.
\end{equation}
where $t_j\equiv \frac{j}{N}$ for $j=0,1,\ldots,N$.
On the  Shishkin mesh \eqref{mesh:1}, for $1\le j\le N$, we have $$C\sqrt{\varepsilon}N^{-1}\le h_j \le CN^{-1}.$$

%
%


%
%

\subsection{Projections}
In this section, we combine the local $L^2$ projection and Gauss-Radau projection to obtain a new interpolation for later analytical proof.
\begin{itemize}
\item
Local $L^2$ projection $\pi$. For any $z\in L^2({\Omega})$, $\pi z\in V_N$ is defined as:
\begin{equation*}
\begin{aligned}
\left<\pi z,v\right>_{I_j}=\left<z,v\right>_{I_j}, \quad \forall v\in \mathbb{P}_k(I_j),
\end{aligned}
\end{equation*}
for each element $I_j=(x_{j-1},x_j)$, $j=1,2,\ldots,N$.
\item
Gauss-Radau projection $\pi^{\pm}$. For any $z\in H^1(I_j)$, $\pi^{\pm} z\in \mathbb{P}_k(I_j)$ is defined as:
\begin{equation*}
\begin{aligned}
&\left<\pi^{\pm} z,v\right>_{I_j}=\left<z,v\right>_{I_j},\quad \forall v\in \mathbb{P}_{k-1}(I_j),\\
&(\pi^{+}z)^{+}_{j-1}=z^{+}_{j-1},\quad (\pi^{-}z)^{-}_j=z^{-}_j,
\end{aligned}
\end{equation*}
for each element $I_j$, $j=1,2,\ldots,N$.
\end{itemize}
The proof of the existence and uniqueness of Gauss-Radau projection is referred to reference \cite{A1999-An}.

For the analysis of the article, we define the following interpolations:
\begin{equation*}
\begin{aligned}
&P^{-}_N u|_{I_j}=
\left\{
\begin{aligned}
& \pi^{-}u, \quad &&\text{$j=1,2,\ldots,N/4,3N/4+1,\ldots,N-1$},\\
& \pi u,\quad &&\text{$j=N/4+1,\ldots ,3N/4,N$},\\
\end{aligned}
\right. \\
&P^{+}_N q|_{I_j}=
\left\{
\begin{aligned}
& \pi^{+}q, \quad &&\text{$j=2,3,\ldots,N$},\\
& \pi q,\quad &&\text{$j=1$}.\\
\end{aligned}
\right. 
\end{aligned}
\end{equation*}

Let $||v||^2=\sum_{j=1}^N||v||_{I_j}^2$ and $||v||_{I_j}^2=\left<v,v\right>_{I_j}$.
\begin{lemma}\label{lem2}
(Reference \citep[Lemma 2.3]{c2022-balanced}) On the Shishkin mesh \eqref{mesh:1} with $\sigma \ge k+1$, one has
\begin{align}
&||u-P^{-}_N u||\le C\left[\varepsilon^{1/4}(N^{-1}\ln N)^{k+1}+N^{-(k+1)}\right],\label{mesh:2}\\
&||q-P^{+}_N q||\le C\varepsilon^{3/4}(N^{-1}\ln N)^{k+1},\label{mesh:3}\\
&||u-P^{-}_N u||_{L^{\infty}(\Omega)}\le CN^{-(k+1)},\label{mesh:4}\\
&||q-P^{+}_N q||_{L^{\infty}(\Omega)}\le C\varepsilon^{1/2}N^{-(k+1)}.\label{mesh:5}
\end{align}
\end{lemma}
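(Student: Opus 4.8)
The plan is to use the linearity of the three projections together with the Shishkin decomposition of Lemma~\ref{lem1}. Since $P^{-}_N$ and $P^{+}_N$ are assembled from $\pi,\pi^{+},\pi^{-}$, all of which are linear, I would split
\[
u-P^{-}_N u=(\bar u-P^{-}_N\bar u)+(u_{\varepsilon,1}-P^{-}_N u_{\varepsilon,1})+(u_{\varepsilon,2}-P^{-}_N u_{\varepsilon,2}),
\]
and likewise for $q-P^{+}_N q$, then estimate each summand separately over the fine layer blocks $[0,\tau]$, $[1-\tau,1]$ and the coarse block $[\tau,1-\tau]$. The only tool needed beyond Lemma~\ref{lem1} is the elementwise approximation property of the projections: on a cell $I_j$ of length $h_j$,
\[
\|z-\pi z\|_{L^2(I_j)}+\|z-\pi^{\pm}z\|_{L^2(I_j)}\le C h_j^{k+1}|z|_{H^{k+1}(I_j)},\qquad \|z-\pi^{\pm}z\|_{L^{\infty}(I_j)}\le C h_j^{k+1}\|z^{(k+1)}\|_{L^{\infty}(I_j)},
\]
which follow from the Bramble--Hilbert lemma and scaling, together with the $L^2$-stability of all three projections (here the existence and uniqueness of $\pi^{\pm}$ from \cite{A1999-An} is used).

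For the smooth components the derivatives are uniformly bounded, $|\bar u^{(k+1)}|\le C$ and $|\bar q^{(k+1)}|\le C\varepsilon$, so $|\bar u|_{H^{k+1}(I_j)}\le Ch_j^{1/2}$; summing the elementwise bound over all cells and using $h_j\le CN^{-1}$ with $\sum_j h_j=1$ gives $\|\bar u-P^{-}_N\bar u\|\le CN^{-(k+1)}$, and the same in $L^{\infty}$. The extra factor $\varepsilon$ for $\bar q$ yields $C\varepsilon N^{-(k+1)}$, which is swallowed by the weight $\varepsilon^{3/4}(N^{-1}\ln N)^{k+1}$ in \eqref{mesh:3}. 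This already supplies the additive $N^{-(k+1)}$ term in \eqref{mesh:2}.

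The heart of the matter is the layer part, where the $\varepsilon$-powers must cancel exactly to produce the balanced weights. On the fine mesh $h=4\sigma\beta^{-1}\sqrt\varepsilon\,N^{-1}\ln N$. For $u_{\varepsilon,1}$ in $L^2$ I would use $|u_{\varepsilon,1}|_{H^{k+1}(I_j)}^2\le C\varepsilon^{-(k+1)}\int_{I_j}e^{-2\beta x/\sqrt\varepsilon}\,dx$, sum over the fine cells, and invoke $\int_0^{\tau}e^{-2\beta x/\sqrt\varepsilon}\,dx\le C\sqrt\varepsilon$; the three factors combine as
\[
h^{2(k+1)}\cdot\varepsilon^{-(k+1)}\cdot\sqrt\varepsilon=C\varepsilon^{1/2}(N^{-1}\ln N)^{2(k+1)},
\]
whose square root is precisely $\varepsilon^{1/4}(N^{-1}\ln N)^{k+1}$. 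The same computation for $q_{\varepsilon,1}$, whose top derivative carries only $\varepsilon^{-k/2}$, replaces $\varepsilon^{-(k+1)}$ by $\varepsilon^{-k}$ and delivers $\varepsilon^{3/4}(N^{-1}\ln N)^{k+1}$. On the coarse block the layers are exponentially small: since $\sigma\ge k+1$ one has $e^{-\beta\tau/\sqrt\varepsilon}=N^{-\sigma}\le N^{-(k+1)}$, so bounding the projection error there by $L^2$-stability plus the function itself contributes only $O(N^{-(k+1)})$. The mirror-image estimates for $u_{\varepsilon,2}$ and $q_{\varepsilon,2}$ are identical.

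The $L^{\infty}$ bounds \eqref{mesh:4}--\eqref{mesh:5} follow by the same decomposition, taking maxima over cells rather than summing: the coarse block again gives $O(N^{-\sigma})$, while on the fine mesh the layer contributes $h^{k+1}\|u_{\varepsilon,1}^{(k+1)}\|_{L^{\infty}(I_j)}\le C(N^{-1}\ln N)^{k+1}\max_j e^{-\beta x_{j-1}/\sqrt\varepsilon}$ for $u$ (with an extra $\varepsilon^{1/2}$ for $q$). I expect the principal obstacle to be the exact bookkeeping of $\varepsilon$-powers in the layer term, since the balanced weights $\varepsilon^{1/4}$ and $\varepsilon^{3/4}$ arise only from the precise interplay of $h^{k+1}\sim\varepsilon^{(k+1)/2}$, the derivative blow-up $\varepsilon^{-(k+1)/2}$ (resp.\ $\varepsilon^{-k/2}$), and the $\sqrt\varepsilon$ gained by integrating the layer profile over the fine region; a slip in any one exponent destroys the balanced-norm optimality. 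A secondary delicate point is the logarithmic factor in the $L^{\infty}$ layer estimate, where the direct argument yields $(N^{-1}\ln N)^{k+1}$, so recovering the stated $N^{-(k+1)}$ in \eqref{mesh:4}--\eqref{mesh:5} requires a sharper pointwise (nodal) refinement of the Gauss--Radau projection.
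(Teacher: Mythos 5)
First, for context: the paper itself gives no proof of this lemma --- it is quoted from \cite[Lemma 2.3]{c2022-balanced} --- so your attempt can only be measured against the standard Shishkin-mesh argument such references use, and for \eqref{mesh:2} your outline is exactly that argument: split $u$ by Lemma \ref{lem1}, use $\|z-\pi^{\pm}z\|_{L^2(I_j)}\le Ch_j^{k+1}|z|_{H^{k+1}(I_j)}$ on the fine cells, where $h_j\sim\sqrt{\varepsilon}N^{-1}\ln N$, and use exponential smallness on $[\tau,1-\tau]$; your bookkeeping $h^{2(k+1)}\cdot\varepsilon^{-(k+1)}\cdot\sqrt{\varepsilon}$ (and $\varepsilon^{-k}$ for $q$) is correct. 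Two of your steps, however, fail as written. The first is the coarse-region step. You appeal to ``$L^2$-stability of all three projections'', but the Gauss--Radau projections $\pi^{\pm}$ are \emph{not} $L^2$-stable: they interpolate an endpoint value, and point evaluation is not controlled by the $L^2$ norm. This matters precisely for \eqref{mesh:3}, since $P^{+}_N$ uses $\pi^{+}$ (not $\pi$) on the coarse cells, and since \eqref{mesh:3}, unlike \eqref{mesh:2}, has no additive $N^{-(k+1)}$ term: your claim that the coarse block ``contributes only $O(N^{-(k+1)})$'' cannot be absorbed into $C\varepsilon^{3/4}(N^{-1}\ln N)^{k+1}$ (that would force $\varepsilon^{-3/4}\le C(\ln N)^{k+1}$). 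What is needed there is a bound of order $\varepsilon^{3/4}N^{-\sigma}$, obtained by integrating the profile $|q_{\varepsilon,1}|\le C\sqrt{\varepsilon}\,e^{-\beta x/\sqrt{\varepsilon}}$ over $[\tau,1-\tau]$ rather than multiplying its supremum by the length; and even then the endpoint-interpolation term of $\pi^{+}$ on the first coarse cell contributes an $L^2$ quantity of order $\sqrt{\varepsilon}N^{-\sigma}h_j^{1/2}\sim\sqrt{\varepsilon}N^{-\sigma-1/2}$, which exceeds $\varepsilon^{3/4}(N^{-1}\ln N)^{k+1}$ once $\varepsilon\ll N^{-2}(\ln N)^{-4(k+1)}$. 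So this step needs a genuinely different treatment, and it indicates that \eqref{mesh:3}, for the interpolant defined in this paper, is at best provable under an extra restriction on how small $\varepsilon$ may be relative to $N$.

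Second, you do not prove \eqref{mesh:4}--\eqref{mesh:5} at all: your own argument yields $C(N^{-1}\ln N)^{k+1}$ and $C\varepsilon^{1/2}(N^{-1}\ln N)^{k+1}$, and you defer removal of the logarithm to a hoped-for ``sharper pointwise (nodal) refinement''. No such refinement can exist. On the first fine cell $I_1$, of length $h_1=4\sigma\beta^{-1}\sqrt{\varepsilon}N^{-1}\ln N$, a genuine layer (say the constant-coefficient model, where $u_{\varepsilon,1}$ is a multiple of $e^{-\beta x/\sqrt{\varepsilon}}$) satisfies $|u_{\varepsilon,1}^{(k+1)}|\ge c\,\varepsilon^{-(k+1)/2}$ throughout $I_1$, and the classical lower bound for best polynomial approximation (divided differences at Chebyshev points) then gives $\|u_{\varepsilon,1}-p\|_{L^{\infty}(I_1)}\ge c\,h_1^{k+1}\varepsilon^{-(k+1)/2}=c\,(N^{-1}\ln N)^{k+1}$ for \emph{every} $p\in\mathbb{P}_k(I_1)$, in particular for $P^{-}_N u$, whatever projection is used. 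Hence the log-free bounds \eqref{mesh:4}--\eqref{mesh:5} are unattainable on a Shishkin mesh; the provable statements carry the factor $(\ln N)^{k+1}$, and the correct conclusion of your analysis should have been that the quoted lemma is too strong as stated, not that a sharper projection estimate is missing. (The log-corrected versions still suffice where the paper actually uses these bounds; for instance $S_5$ then comes out as $C\varepsilon^{1/4}(N^{-1}\ln N)^{k+1}|||\xi|||_E$.)
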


%
%
%

\subsection{Convergence analysis}
Recall ${w}=(q,u)$ to be the exact solution of the problem \eqref{eq1.1}.
From \eqref{method:2}, we define an energy norm by
\begin{equation}\label{norm:1}
|||w|||^2_{E}=B(w;w)= \varepsilon^{-1}||q||^2+||b(x)^{\frac{1}{2}}u||^2+\sum_{j\in\{0,N\}}\lambda_j[[u]]_j^2+\lambda_q[[q]]^2_{\frac{3N}{4}}.
\end{equation} 
We introduce a more powerful balanced norm based on this energy norm, which is defined by
\begin{equation}\label{norm:2}
|||w|||^2_{B}= \varepsilon^{-\frac{3}{2}}||q||^2+||b(x)^{\frac{1}{2}}u||^2+\sum_{j\in\{0,N\}}[[u]]_j^2+[[q]]^2_{\frac{3N}{4}}.
\end{equation}

\begin{theorem}\label{theorem1}
Assume $\varepsilon^{} \le CN^{-1}$. On the Shishkin mesh \eqref{mesh:1}, we take $\sigma \ge k+1$. Let ${W}=(Q, U)\in V_N \times V_N$ be the numerical solution of the LDG scheme \eqref{method:1}. Let ${w}=(q,u)$ be the exact solution of the problem \eqref{eq1.1} satisfying Lemma \ref{lem1}. Define ${e=w-W}$. There is a constant $C>0$ such that
\begin{align*}
&|||e|||_{E}\le  C\left(\varepsilon^{1/4}(N^{-1}\ln N)^{k+1}+\varepsilon^{1/2}N^{-k}+N^{-(k+1)}\right),\\
&|||e|||_{B}\le C\left(\varepsilon^{1/2}(N^{-1}\ln N)^{k+1}+\varepsilon^{1/4}N^{-k}+(N^{-1}\ln N)^{k+1}\right).
\end{align*}
\end{theorem}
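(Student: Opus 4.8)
The plan is to run a Galerkin-orthogonality argument, using the mixed projection $\Pi w:=(P^{+}_N q,\,P^{-}_N u)$ as the bridge between the exact solution $w=(q,u)$ and the numerical solution $W=(Q,U)$. First I would check consistency: since the exact $q$ and $u$ are continuous with $u(0)=u(1)=0$, every jump of $w$ vanishes and each numerical flux evaluated at $w$ returns the exact trace, so $B(w;\chi)=\langle f,v\rangle$ for all $\chi\in V_N\times V_N$. Subtracting the scheme \eqref{method:1} gives the orthogonality $B(w-W;\chi)=0$. Writing $e=\eta-\xi$ with $\eta:=w-\Pi w=(\eta_q,\eta_u)$ and $\xi:=W-\Pi w=(\xi_q,\xi_u)\in V_N\times V_N$, this reads $B(\xi;\chi)=B(\eta;\chi)$ for every $\chi$.

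For the energy bound I would use the identity $|||\xi|||_E^2=B(\xi;\xi)$ from \eqref{norm:1} and take $\chi=\xi$, so that $|||\xi|||_E^2=B(\eta;\xi)$. The whole point of the projection design is that it annihilates the dangerous contributions of $B(\eta;\xi)$: the derivative couplings $\langle\eta_u,\xi_q'\rangle$ and $\langle\eta_q,\xi_u'\rangle$ vanish because $\xi_q',\xi_u'\in\mathbb{P}_{k-1}$ while both $\pi$ and $\pi^{\pm}$ reproduce $\mathbb{P}_{k-1}$, and the Gauss--Radau projections additionally kill the one-sided traces $(\eta_u)^-_j$ and $(\eta_q)^+_j$ on every element where $\pi^{\pm}$ is used. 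What survives is a short list: the reaction product $\langle b\eta_u,\xi_u\rangle$, the scaled flux product $\varepsilon^{-1}\langle\eta_q,\xi_q\rangle$, the boundary stabilization and $q$-flux traces at $j\in\{0,N\}$, the residual $u$-traces on the central block $[x_{N/4},x_{3N/4}]$ where the full $L^2$ projection is used, and the transition product $\lambda_q(\eta_q)^-_{3N/4}[[\xi_q]]_{3N/4}$. Each is estimated by Cauchy--Schwarz together with \eqref{mesh:2}--\eqref{mesh:5}; the central-block coupling is the most delicate residual and is kept within the claimed order by trace and inverse inequalities (using $h_j\gtrsim\sqrt\varepsilon N^{-1}$ and $\varepsilon\le CN^{-1}$, hence $\varepsilon N\le C$) together with the smallness of the flux $q=O(\varepsilon)$ on the central block. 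Young's inequality then absorbs each factor of $\xi$ into $|||\xi|||_E^2$; in particular the transition term splits as $\le\tfrac12\lambda_q[(\eta_q)^-_{3N/4}]^2+\tfrac12\lambda_q[[\xi_q]]^2_{3N/4}$, the second summand absorbed by the coercive jump term in \eqref{norm:1} and the first controlled through \eqref{mesh:5}. The triangle inequality $|||e|||_E\le|||\eta|||_E+|||\xi|||_E$ with $|||\eta|||_E$ read off from Lemma \ref{lem2} then yields the first bound.

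For the balanced norm the obstacle is genuine rather than cosmetic. Since \eqref{norm:2} replaces the weight $\varepsilon^{-1}$ on the flux by $\varepsilon^{-3/2}$, one has $\varepsilon^{-3/2}\|\xi_q\|^2=\varepsilon^{-1/2}\,\varepsilon^{-1}\|\xi_q\|^2$, and merely inserting $\varepsilon^{-1}\|\xi_q\|^2\le|||\xi|||_E^2$ is too lossy: the $N^{-(k+1)}$ part of $|||\xi|||_E$, which originates in the $u$-interpolation error \eqref{mesh:2}, re-enters multiplied by $\varepsilon^{-1/4}$ and cannot be absorbed into $(N^{-1}\ln N)^{k+1}$ once $\varepsilon\le CN^{-1}$. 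The remedy is a separate, sharp $L^2$ estimate of the discrete flux error, of the order of the flux interpolation error \eqref{mesh:3}, namely $\|\xi_q\|\le C\varepsilon^{3/4}(N^{-1}\ln N)^{k+1}+\cdots$. I would obtain it by testing the first variational equation alone, i.e.\ the slice $\chi=(r,0)$ of the orthogonality, with a flux-adapted choice of $r$, so that $\|\xi_q\|^2$ is expressed through $\langle\eta_q,\xi_q\rangle$, the already-bounded $\xi_u$ and its boundary jumps, and the transition jump $[[\xi_q]]_{3N/4}$, the projection identities again removing the derivative couplings. Here the new flux term $\lambda_q[[Q]]_{3N/4}$ is indispensable: it furnishes coercive control of $[[\xi_q]]_{3N/4}$ at the junction between the central block and the right layer, exactly the interface at which the plain flux of \cite{c2022-balanced} forces the unattainable requirement that the smooth part lie in $V_N$. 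Feeding this sharp flux bound, the jump estimates, and $|||\eta|||_B$ (termwise from \eqref{mesh:2}--\eqref{mesh:5}) into the triangle inequality then gives the balanced bound.

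The decisive step, and the one I expect to be hardest, is this sharp stand-alone estimate of $\|\xi_q\|$: it cannot be routed through the energy norm without losing a power of $\varepsilon$, so the flux error must be controlled directly and region by region, with every inverse inequality on the anisotropic layer elements tracked carefully to keep all constants independent of $\varepsilon$ and $N$.
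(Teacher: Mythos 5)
Your energy-norm argument is essentially the paper's own: the same splitting $e=\eta-\xi$ with the interpolant $(P^{+}_N q,\,P^{-}_N u)$, the same error equation $B(\xi;\chi)=B(\eta;\chi)$, the identity $B(\xi;\xi)=|||\xi|||_E^2$, and a term-by-term bound of $B(\eta;\xi)$. The genuine gap is in your balanced-norm strategy, and it originates in a misdiagnosis. You assert that the energy bound for the discrete part $\xi$ contains an $\varepsilon$-free term $N^{-(k+1)}$ ``originating in the $u$-interpolation error \eqref{mesh:2}'', so that multiplying by $\varepsilon^{-1/4}$ is too lossy. That is false for the estimate the paper actually proves, namely $|||\xi|||_E\le C\bigl(\varepsilon^{1/4}(N^{-1}\ln N)^{k+1}+\varepsilon^{1/2}N^{-k}\bigr)$: every term carries a positive power of $\varepsilon$. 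The $\varepsilon$-free terms $N^{-(k+1)}$ and $(N^{-1}\ln N)^{k+1}$ in Theorem \ref{theorem1} come only from $\eta$, whose energy and balanced norms are estimated directly from Lemma \ref{lem2} and are never multiplied by $\varepsilon^{-1/4}$. The mechanism you overlooked is the localization of the reaction term: in the paper, $\left<b\eta_u,\xi_u\right>$ survives only on the layer cells where the Gauss--Radau projection is used (on the central cells and on $I_N$ the local $L^2$ projection kills it), a region of measure $O(\sqrt{\varepsilon}\ln N)$; combining the $L^{\infty}$ bound \eqref{mesh:4} with this small measure gives $C\varepsilon^{1/4}(\ln N)^{1/2}N^{-(k+1)}|||\xi|||_E$, not $CN^{-(k+1)}|||\xi|||_E$. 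If, as your write-up suggests, one bounds this term globally by Cauchy--Schwarz with \eqref{mesh:2}, one creates exactly the parasitic $\varepsilon$-free term that derails your balanced estimate. The remaining trace terms likewise pick up $\varepsilon^{1/4}$ or $\varepsilon^{1/2}$ through the weights $\lambda_0=\lambda_N=\sqrt{\varepsilon}$, $\lambda_q=\varepsilon^{-1/2}$ and through $\|\xi_q\|\le\varepsilon^{1/2}|||\xi|||_E$.

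With that in place, the paper's balanced bound is immediate: comparing \eqref{norm:1} and \eqref{norm:2} termwise gives the elementary inequality $|||\xi|||_B\le C\varepsilon^{-1/4}|||\xi|||_E\le C\bigl((N^{-1}\ln N)^{k+1}+\varepsilon^{1/4}N^{-k}\bigr)$, and the triangle inequality together with the directly estimated $|||\eta|||_B$ finishes the proof. Consequently your ``decisive step'' --- a stand-alone estimate $\|\xi_q\|\le C\varepsilon^{3/4}(N^{-1}\ln N)^{k+1}+\cdots$ --- is superfluous: it already follows from the energy bound via $\|\xi_q\|\le\varepsilon^{1/2}|||\xi|||_E$. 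It is also the one step you never actually prove, and your sketch of it is doubtful: testing only the slice $\chi=(r,0)$ leaves the coupling $\left<\xi_u,r'\right>$ and the interior traces of $\xi_u$ in play, and these are not separately controllable at the required order without redoing the whole energy argument. So as written the balanced-norm half of your proof does not close; the repair is not a new flux estimate but the layer-localization of the $\eta_u$-terms described above.
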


\begin{proof}
We use interpolation $P^{+}_N q$ and $P^{-}_N u$ to divide the error $e$ into two parts: ${e}=(q-Q,u-U)=\bold{\eta-\xi}$ with 
\begin{equation*}
\begin{aligned}
&\bold{\eta}=(\eta_q,\eta_u)=(q-P^{+}_N q,u-P^{-}_N u),\\
&\bold{\xi}=(\xi_q,\xi_u)=(Q-P^{+}_N q,U-P^{-}_N u)\in V_N \times V_N.
\end{aligned}
\end{equation*}

Owing to the consistency of numerical fluxes and the regularity assumptions of $u$ and $q$, one has the error equation
\begin{equation}\label{norm:3}
B(\xi;\chi)=B(\eta;\chi),\quad \forall \chi=(r,v)\in V_N \times V_N.
\end{equation}
Set $\chi=\xi$ in \eqref{norm:3}. According to the integration by parts, we derive
\begin{equation}\label{norm:4}
B(\xi;\xi)=|||\xi|||_E^2.
\end{equation}
Furthermore, by \eqref{method:2}, we can get
\begin{equation*}
B(\eta;\xi)\equiv \sum_{i=1}^8 S_i.
\end{equation*}
Here,
\begin{equation*}
\begin{aligned}
&S_1=\varepsilon^{-1}\left<\eta_q,\xi_q\right>,\quad  &&S_2=\left<\eta_u,\xi'_q\right>,\\
&S_3=\left<\eta_q,\xi'_u\right>, \quad &&S_4=\left<b\eta_u,\xi_u\right>,\\
&S_5=\sum_{j=0}^{N-1}(\eta_q)^{+}_j[[\xi_u]]_j-(\eta_q)^{-}_N(\xi_u)^{-}_N,\quad &&S_6=\sum_{j=1}^{N-1}(\eta_u)^{-}_j[[\xi_q]]_j,\\
&S_7=\lambda_q[[\eta_q]]_{3N/4}[[\xi_q]]_{3N/4},\quad &&S_8=\sum_{j\in\{0,N\}}\lambda_j[[\eta_u]]_j[[\xi_u]]_j.
\end{aligned}
\end{equation*}
Then, we will analyze them separately.

Using the Cauchy–Schwarz inequality and \eqref{mesh:3}, one has
\begin{align*}
S_1\le C(\varepsilon^{-1/2}||\eta_q||)(\varepsilon^{-1/2}||\xi_q||)\le C\varepsilon^{1/4}(N^{-1}\ln N)^{k+1}|||\xi|||_E.
\end{align*}
Owing to the orthogonality of the projection, we have $S_2=S_3=0$.\\
By the Cauchy–Schwarz inequality and \eqref{mesh:4}, we have
\begin{align*}
S_4&=\left<b\eta_u,\xi_u\right>_{(0,x_{N/4})\cup (x_{3N/4+1},x_{N-1})}\\
   &\le C||\eta_u||_{L^{\infty}(\Omega)}(\sqrt{\varepsilon}\ln N)^{1/2}|||\xi|||_E\\
   &\le C\varepsilon^{1/4}(\ln N)^{1/2}N^{-(k+1)}|||\xi|||_E.
\end{align*}
From the Cauchy–Schwarz inequality, \eqref{mesh:5} and $\lambda_0=\lambda_N =\sqrt{\varepsilon}$, we can get
\begin{align*}
S_5&=(\eta_q)^{+}_0(\xi_u)^{+}_0-(\eta_q)^{-}_N(\xi_u)^{-}_N\\
   &\le C\left[\sum_{j\in\{0,N\}}\lambda_j^{-1}[[\eta_q]]^{2}_j \right]^{1/2}\left[\sum_{j\in\{0,N\}}\lambda_j [[\xi_u]]^{2}_j \right]^{1/2}\\
   &\le C \varepsilon^{-1/4}||\eta_q||_{L^{\infty}(\Omega)}|||\xi|||_E\\
   &\le C\varepsilon^{1/4}N^{-(k+1)}|||\xi|||_E.
\end{align*}
Using the Cauchy–Schwarz inequality, the trace inequality, \eqref{mesh:4}, $\varepsilon \le CN^{-1}$ and $\lambda_q=\varepsilon^{-1/2}$, we obtain
\begin{align*}
S_6&=\sum_{j=N/4+1}^{3N/4-1}(\eta_u)^{-}_j[[\xi_q]]_j+(\eta_u)^{-}_{3N/4}[[\xi_q]]_{3N/4}\\
     &\le C\sum_{j=N/4+1}^{3N/4-1}||\eta_u||_{L^{\infty}(I_j)}h_j^{-1/2}||\xi_q||_{I_j \cup I_{j+1}}+C\lambda_q^{-1/2}||\eta_u||_{L^{\infty}(3N/4)}\left[\lambda_q[[\xi_q]]^2_{3N/4}\right]^{1/2}\\
     &\le C\varepsilon^{1/2}N^{-k}|||\xi|||_E+C\varepsilon^{1/4}N^{-(k+1)}|||\xi|||_E.
\end{align*}
By the Cauchy–Schwarz inequality, \eqref{mesh:5} and $\lambda_q =\varepsilon^{-1/2}$, one has
\begin{align*}
S_7\le C \left[\lambda_q[[\eta_q]]^2_{3N/4}\right]^{1/2} \left[\lambda_q[[\xi_q]]^2_{3N/4}\right]^{1/2}\le C\varepsilon^{1/4}N^{-(k+1)}|||\xi|||_E.
\end{align*}
From the Cauchy–Schwarz inequality, \eqref{mesh:4} and $\lambda_0 =\lambda_N =\varepsilon^{1/2}$,  we have
\begin{equation*}
\begin{aligned}
S_8&=\sum_{j\in \{0,N\}}\lambda_j[[\eta_u]]_j[[\xi_u]]_j\\
      &\le C\left[\sum_{j\in \{0,N\}}\lambda_j[[\eta_u]]^{2}_j\right]^{\frac{1}{2}}\left[\sum_{j\in \{0,N\}}\lambda_j[[\xi_u]]^{2}_j\right]^{\frac{1}{2}}\\
      &\le C \varepsilon^{1/4}||\eta_u||_{L^{\infty}(\Omega)}|||\xi|||_E\\
      &\le C\varepsilon^{1/4}N^{-(k+1)}|||\xi|||_E.
\end{aligned}
\end{equation*}
We gather all the above estimates and get
\begin{equation*}
\begin{aligned}
B(\eta;\xi)\le C\varepsilon^{1/4}(N^{-1}\ln N)^{k+1}|||\xi|||_{E}.
\end{aligned}
\end{equation*}
Considering \eqref{norm:3} and \eqref{norm:4}, we have
\begin{equation}\label{norm:5}
|||\xi|||^2_{E}= B(\xi;\xi)=B(\eta;\xi)\le C\left(\varepsilon^{1/4}(N^{-1}\ln N)^{k+1}+\varepsilon^{1/2}N^{-k}\right)|||\xi|||_{E}.
\end{equation}
Therefore,  it is straightforward to derive
\begin{align}
&|||\xi|||_{E}\le C\varepsilon^{1/4}(N^{-1}\ln N)^{k+1}+C\varepsilon^{1/2}N^{-k},\label{norm:6}\\
&|||\xi|||_{B}\le C\varepsilon^{-1/4}|||\xi|||_E\le C(N^{-1}\ln N)^{k+1}+C\varepsilon^{1/4}N^{-k}. \label{norm:7}
\end{align} 
From Lemma \ref{lem2} and the definition of norms in \ref{norm:1} and \ref {norm:2}, we can get
\begin{align}
&|||\eta|||_{E}\le C\varepsilon^{1/4}(N^{-1}\ln N)^{k+1}+CN^{-(k+1)},\label{norm:8}\\
&|||\eta|||_{B}\le C\varepsilon^{1/2}(N^{-1}\ln N)^{k+1}+C(N^{-1}\ln N)^{k+1}.\label{norm:9}
\end{align} 
Combining the results of \eqref{norm:6}, \eqref{norm:7}, \eqref{norm:8} and \eqref{norm:9}, we prove Theorem \ref{theorem1}.
%
\end{proof}

\begin{remark}
In Theorem \ref{theorem1}, if  $\varepsilon^{} \le CN^{-4}$, then we have 
$$|||e|||_{E}\le C(N^{-1}\ln N)^{k+1},\quad
|||e|||_{B}\le C(N^{-1}\ln N)^{k+1}.$$
We obtain optimal convergence order without the impossible condition in \cite{c2022-balanced}.
\end{remark}

\section{LDG method for two-dimensional case}
We consider a two-dimensional singularly perturbed problem:
\begin{align}\label{eq3.1}
&-\varepsilon \Delta u+b(x,y)u=f(x,y),\quad \text{in $ \Omega =(0,1)^2$},\\
&u(0)=0,\quad \text{on $ \partial\Omega$}. \nonumber
\end{align}
where $0 < \varepsilon \ll 1$ is a perturbation parameter,  $b(x, y) \geqslant 2\beta ^2 > 0$ and $f(x,y)$ are sufficient smooth functions on $\bar{\Omega}$. And it is well-known that the problem has a unique solution, which is characterized by the presence of  boundary layers of width  $O(\sqrt{\varepsilon}\ln(1/\varepsilon))$ along the entire boundary $ \partial\Omega$. 

\subsection{LDG method}
The two-dimensional layer-adapted meshes $\{(x_i
, y_j ), i,j=1,\ldots,N\}$ is constrcuted by the tensor-product of the one-dimensional layer-adapted meshes in the horizontal and vertical directions, where $x_i$ is defined in \eqref{mesh:1} and $y_j$ is defined similarly. 

Define $I_i=(x_{i-1},x_i)$ and $J_j=(y_{j-1},y_j)$ for $i,j=1,\ldots,N$. We set $h_i=x_i-x_{i-1}$ and $h_j=y_j-y_{j-1}$. Besides, let 
$$
\Omega_N=\{\text{$K_{ij}$: $K_{ij} = I_i\times J_j$ for $i,j=1,\ldots,N$}\}
$$ 
be a rectangle partition of $\Omega$. Then the finite element space is defined as: $$R_{N}=\{v\in L^2(\Omega):\text{$v|_{K_{ij}}\in \mathbb{Q}_k(K_{ij})$ }, K_{ij}\in \Omega_N \},$$ where $\mathbb{Q}_k(K_{ij})$ represents the space of polynomials on $K_{ij}$ with a maximum degree of $k$ in each variable. For $v \in R_N$ and $y \in J_j$, $j = 1, 2, \ldots ,N$, we use $v^{\pm}_{i,y}=\lim_{x\to x^{\pm}_i}v(x,y)$ and $v^{\pm}_{x,j}=\lim_{y\to y^{\pm}_j}v(x,y)$ to express the traces evaluated from the four directions.
Define the jumps on the vertical and horizontal edges:
\begin{align*}
&&&[[v]]_{i,y}=v^{-}_{i,y}-v^{+}_{i,y}\text{ for $i=1,2,\ldots,N-1$},\; &&[[v]]_{0,y}=-v^{+}_{0,y},\;&[[v]]_{N,y}=v^{-}_{N,y}.\\ 
&&&[[v]]_{x,j}=v^{-}_{x,j}-v^{+}_{x,j}\text{ for $j=1,2,\ldots,N-1$},\; &&[[v]]_{x,0}=-v^{+}_{x,0},\;&[[v]]_{x,N}=v^{-}_{x,N}.
\end{align*}

Next, we will introduce the LDG method   for problem \eqref{eq3.1}. First, one has
\begin{equation*}
\begin{aligned}
&\varepsilon^{-1}p=u_x,\quad \text{in $\Omega$}, \\
&\varepsilon^{-1}q=u_y,\quad \text{in $\Omega$}, \\
&-p_x-q_y+bu=f,\quad \text{in $\Omega$}.
\end{aligned}
\end{equation*}
Define $Z=(v,s,r)\in R_N \times R_N\times R_N$ to be any test function. Let $(\cdot,\cdot)_{D}$ be the $L^2$ inner product in the domain $D\subset \mathbb{R}^2$. Let $\left< \cdot,\cdot\right>_{I}$ be the inner product in $L^2(I)$ with $I\subset \mathbb{R}$.
Find the numerical solution $T=(U,P,Q)\in R_N \times R_N\times R_N$ such that the following variational forms hold in each element $K_{ij}$,
\begin{equation}\label{LDG2:1}
\begin{aligned}
&\varepsilon^{-1}(P,s)_{K_{ij}}+(U,s_x)_{K_{ij}}-\left< \hat{U}_{i,y}, s^{-}_{i,y}\right>_{J_j}+\left< \hat{U}_{i-1,y}, s^{+}_{i-1,y}\right>_{J_j}=0,\\
&\varepsilon^{-1}(Q,r)_{K_{ij}}+(U,r_y)_{K_{ij}}-\left<\hat{U}_{x,j}, r^{-}_{x,j}\right>_{I_i}+\left<\hat{U}_{x,j-1}, r^{+}_{x,j-1}\right>_{I_i}=0,\\
&(P,v_x)_{K_{ij}}-\left<\hat{P}_{i,y}, v^{-}_{i,y}\right>_{J_j}+\left<\hat{P}_{i-1,y}, v^{+}_{i-1,y}\right>_{J_j}+(Q,v_y)_{K_{ij}}-\left<\hat{Q}_{x,j}, v^{-}_{x,j}\right>_{I_i}\\
&+\left<\hat{Q}_{x,j-1}, v^{+}_{x,j-1}\right>_{I_i}+(bU,v)_{K_{ij}}=(f,v)_{K_{ij}}.
\end{aligned}
\end{equation}
For $y\in J_j$ and $j=1,2,\ldots,N$, we define the numerical fluxes by
\begin{equation*}
\begin{aligned}
&\hat{P}_{i,y}=
\left\{
\begin{aligned}
& P^{+}_{0,y}+\lambda_{0,y}U^{+}_{0,y}, \quad &&\text{$i=0$},\\
& P^{+}_{i,y}, \quad &&\text{$i=1,2,\ldots,N-1$},\\
& P^{-}_{N,y}-\lambda_{N,y}U^{-}_{N,y}, \quad &&\text{$i=N$},
\end{aligned}
\right. \label{eq:Bakhvalov mesh-Roos}\\
&\hat{U}_{i,y}=
\left\{
\begin{aligned}
& 0, \quad &&\text{$i=0,N$},\\
&U_{{\frac{3}{4}N},y}^{-}+\lambda_P[[P]]_{{\frac{3}{4}N},y},\quad &&\text{$i=\frac{3}{4}N$},\\
& U^{-}_{i,y}, \quad &&\text{$i=1,2,\ldots,\frac{3}{4}N-1,\frac{3}{4}N+1,\ldots,N-1$},\\
\end{aligned}
\right.\\
\end{aligned}
\end{equation*}
where the stabilization parameters $\lambda_{0,y}=\lambda_{N,y}=\sqrt{\varepsilon}$ and $\lambda_P=1/\sqrt{\varepsilon}$. Analogously, when $x\in I_i$, we can define $\hat{Q}_{x,j}$ for $i=1,2,\ldots,N$ and $\hat{U}_{x,j}$ for $j=1,2,\ldots,N$, where $\hat{U}_{x,\frac{3N}{4}}=U_{x,{\frac{3}{4}N}}^{-}+\lambda_Q[[Q]]_{x,{\frac{3}{4}N}}$ with $\lambda_Q=1/\sqrt{\varepsilon}$.

We 
express the scheme \eqref{LDG2:1} by a compact form: Find  $T=(U,P,Q)\in R_N\times R_N\times R_N$ such that
\begin{equation*}
B(T;Z)=(f,v),\quad \forall Z=(v,s,r)\in R_N\times R_N\times R_N,
\end{equation*}
where
\begin{equation}\label{LDG2:2}
\begin{aligned}
B(T;Z)=&(b U,v)+\varepsilon^{-1}(P,s)+\varepsilon^{-1}(Q,r)\\
&+(U,s_x)+\sum_{j=1}^{N}\sum_{i=1}^{N-1}\left<U^{-}_{i,y},[[s]]_{i,y}\right>_{J_j}
+(U,r_y)+\sum_{i=1}^{N}\sum_{j=1}^{N-1}\left<U^{-}_{x,j},[[r]]_{x,j}\right>_{I_i}\\
&+(P,v_x)+\sum_{j=1}^{N}\left \{ \sum_{i=0}^{N-1}\left<P^{+}_{i,y},[[v]]_{i,y}\right>_{J_j}-\left<P^{-}_{N,y},[[v]]_{N,y}\right>_{J_j} \right\}\\
&+(Q,v_y)+\sum_{i=1}^{N}\left \{ \sum_{j=0}^{N-1}\left<Q^{+}_{x,j},[[v]]_{x,j}\right>_{I_i}-\left<Q^{-}_{x,N},[[v]]_{x,N}\right>_{I_i} \right\}\\
&+\sum_{j=1}^{N}\sum_{i\in \{0,N\}}\left<\lambda_{i,y} [[U]]_{i,y},[[v]]_{i,y}\right>_{J_j}+\sum_{i=1}^{N}\sum_{j\in \{0,N\}}\left<\lambda_{x,j} [[U]]_{x,j},[[v]]_{x,j}\right>_{I_i}\\
&+\sum_{j=1}^{N}\left<\lambda_P[[P]]_{3N/4,y},[[s]]_{3N/4,y}\right>_{J_j}+\sum_{i=1}^{N}\left<\lambda_Q[[Q]]_{x,3N/4},[[r]]_{x,3N/4}\right>_{I_i}.
\end{aligned}
\end{equation}

\subsection{Regularity of the solution}
\begin{lemma}\label{lem3}
(Reference \cite{h1990-d}) Assume that the solution $u$ of \eqref{eq3.1} can be decomposed as
\begin{align*}
u=S+\sum_{k=1}^4 W_k+\sum_{k=1}^4 Z_k, \quad \text{$(x,y)\in \bar{\Omega}$},
\end{align*}
where $S$ is a smooth part, $W_k$ is a boundary layer part and $Z_k$ is a corner layer part.
Moreover, for $0 \le i, j \le k + 2$, there exists a constant $C$ independent of $\varepsilon$ such
that
\begin{equation}\label{solution2:1}
|\frac{\partial^{i+j}S}{\partial x^i \partial y^j}|\le C,\quad |\frac{\partial^{i+j}W_1}{\partial x^i \partial y^j}|\le C\varepsilon^{-\frac{i}{2}}e^{-\frac{\beta x}{\sqrt{\varepsilon}}},\quad |\frac{\partial^{i+j}Z_1}{\partial x^i \partial y^j} |\le C\varepsilon^{-\frac{i+j}{2}}e^{-\frac{\beta (x+y)}{\sqrt{\varepsilon}}},
\end{equation}
and so on for the remaining terms.
\end{lemma}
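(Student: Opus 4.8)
The plan is to produce the decomposition constructively by matched asymptotic expansions and then control the remainder with the maximum principle for the operator $-\varepsilon\Delta+b$, following the scheme of \cite{h1990-d}. I would first build the smooth (outer) part as a truncated expansion $S=\sum_{m=0}^{M}\varepsilon^{m}S_{m}$, where $S_{0}$ solves the reduced equation $bS_{0}=f$ and the higher coefficients are obtained recursively from $bS_{m}=\Delta S_{m-1}$. Since $b,f$ are smooth and $b\ge 2\beta^{2}>0$, each $S_{m}$ is smooth with all derivatives bounded independently of $\varepsilon$; taking $M$ large enough (e.g. $M\ge (k+2)/2$) and absorbing the harmless powers $\varepsilon^{m}$ gives $|\partial_{x}^{i}\partial_{y}^{j}S|\le C$ for $0\le i,j\le k+2$, while $S$ satisfies \eqref{eq3.1} up to a residual of size $O(\varepsilon^{M+1})$.

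Because $S$ does not vanish on $\partial\Omega$, I would next add four edge layer correctors $W_{1},\dots,W_{4}$, one per side. For the left edge $x=0$ I would rescale via $\xi=x/\sqrt\varepsilon$, so that $\varepsilon\partial_{xx}=\partial_{\xi\xi}$, and determine $W_{1}$ from the layer equations $-\partial_{\xi\xi}W+b(0,y)W=\cdots$, parametrised by $y$, whose boundary data cancel the trace of $S$ along that edge. As $b(0,y)\ge 2\beta^{2}$, the admissible decaying solution behaves like $e^{-\sqrt{b(0,y)}\,\xi}\le e^{-\beta x/\sqrt\varepsilon}$; each differentiation in $x$ reinserts a factor $\varepsilon^{-1/2}$ through the chain rule whereas $y$-derivatives remain bounded, which yields exactly $|\partial_{x}^{i}\partial_{y}^{j}W_{1}|\le C\varepsilon^{-i/2}e^{-\beta x/\sqrt\varepsilon}$. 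The remaining $W_{k}$ follow by symmetry.

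The edge correctors do not match on $\partial\Omega$ near the corners, so I would introduce corner layers $Z_{1},\dots,Z_{4}$ depending on both fast variables $\xi=x/\sqrt\varepsilon$ and $\eta=y/\sqrt\varepsilon$. For the corner at the origin, $Z_{1}$ solves a genuinely two-dimensional constant-coefficient problem $-(\partial_{\xi\xi}+\partial_{\eta\eta})Z+b(0,0)Z=\cdots$ on a quadrant with data cancelling the leftover traces of $S+W_{1}+W_{2}$ there; its solution decays exponentially in $\xi+\eta$, and differentiating in $x$ or $y$ again produces the anisotropic factor, giving $|\partial_{x}^{i}\partial_{y}^{j}Z_{1}|\le C\varepsilon^{-(i+j)/2}e^{-\beta(x+y)/\sqrt\varepsilon}$.

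Finally, writing $u=S+\sum_{k}W_{k}+\sum_{k}Z_{k}+R$, the remainder obeys $(-\varepsilon\Delta+b)R=O(\varepsilon^{M+1})$ with boundary values of the size of the layer tails, and I would bound $R$ together with its derivatives by comparing it against barrier functions built from the same exponential profiles $e^{-\beta x/\sqrt\varepsilon}$, $e^{-\beta(x+y)/\sqrt\varepsilon}$, etc., using the comparison principle for $-\varepsilon\Delta+b$. The main obstacle is precisely this last stage combined with the corner analysis: obtaining sharp $\varepsilon$-uniform bounds on the corner layers and on the derivatives of $R$ up to order $k+2$ requires compatibility of $f$ at the four corners (to guarantee the claimed regularity) and a carefully tuned two-dimensional barrier, and it is here that the argument in \cite{h1990-d} does the real work.
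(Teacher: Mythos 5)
First, note what the paper actually does with this statement: it does not prove it. Lemma~\ref{lem3} is quoted verbatim from Han and Kellogg \cite{h1990-d} and is even phrased as a hypothesis (``Assume that the solution $u$ \dots can be decomposed''), precisely because the existence of such a decomposition with the stated derivative bounds requires compatibility conditions on $f$ at the four corners that the paper does not impose. So there is no in-paper proof to compare against; your sketch should be measured against the cited reference, and in outline it is the standard constructive route (outer expansion, edge correctors in the stretched variable, corner correctors, remainder estimate), which is consistent with the literature. Two of your observations are correct and worth keeping: the slack coming from $b\ge 2\beta^2$ (so $\sqrt{b(0,y)}\ge\sqrt{2}\,\beta>\beta$) is exactly what lets you absorb the algebraic factors $\xi^m e^{-\sqrt{2}\beta\xi}\le Ce^{-\beta\xi}$ that appear when you differentiate $e^{-\sqrt{b(0,y)}\,\xi}$ in $y$ --- without that slack your claim that ``$y$-derivatives remain bounded'' would not survive repeated differentiation; and your final remark that corner compatibility is where the real difficulty lies is accurate.

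There is, however, a genuine gap at the last stage of your argument. The comparison principle for $-\varepsilon\Delta+b$ bounds only the remainder $R$ itself in $L^\infty$; it says nothing about $\partial_x^i\partial_y^j R$ for $1\le i,j\le k+2$, yet the lemma asserts pointwise bounds on derivatives up to order $k+2$ of every component of the decomposition, and these derivative bounds on $R$ must be recovered before $R$ can be absorbed into $S$ (or distributed into the layer terms). Repairing this requires a separate mechanism --- e.g.\ local elliptic (Schauder or $L^p$) estimates on balls of radius $O(\sqrt{\varepsilon})$ after rescaling, each application of which costs a factor $\varepsilon^{-1/2}$ per derivative that must then be paid for by taking the expansion order $M$ large enough, or alternatively differentiating the equation and running the barrier argument for the differentiated problems, which in turn demands extra smoothness and compatibility of the data. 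This derivative-recovery step, together with the sharp corner-layer bounds you defer to \cite{h1990-d}, is the technical core of Han and Kellogg's paper; as written, your proposal asserts the conclusion of that step (``bound $R$ together with its derivatives by comparing \dots against barrier functions'') rather than proving it. Since the paper itself sidesteps all of this by assumption and citation, your outline is a reasonable reconstruction of the background result, but it is not a complete proof without filling in that stage.
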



%
%

\subsection{Projections}
In this section, we combine the local $L^2$ projection and Gauss-Radau projection to obtain a new interpolation for later analytical proof.
\begin{itemize}
\item
Local $L^2$ projection $\pi$. For each element $K_{ij} \in \Omega_N$ and any $z\in L^2({\Omega})$, $\pi z\in R_N$ is defined as:
\begin{equation*}
\begin{aligned}
(\pi z,v)_{K_{ij}}=( z,v)_{K_{ij}}, \quad \forall v\in \mathbb{Q}_k(K_{ij}).
\end{aligned}
\end{equation*}
\item
Gauss-Radau projection $\pi^{\pm}$. For $l$, $m\ge 1$, we define 
$$
\mathbb{Q}_{l,m}=\left\{\text{$\sum_{i=0}^{l} \sum_{j=0}^m a_{ij}x^i y^j$:  $a_{ij}\in \mathbb{R}$}\right\}.
$$  
 For each element $K_{ij} \in \Omega_N$ and any $z\in H^1(K_{ij})$, $\pi^{-}_x z,\pi^{-}_y z, \pi^{+}_x z,\pi^{+}_y z\in \mathbb{Q}_k(K_{ij})$ are defined as:
\begin{equation*}
\begin{aligned}
&\left\{
\begin{aligned}
& (\pi^{-}_x z,v)_{K_{ij}}=(z,v)_{K_{ij}}, \quad &&\text{$\forall v\in \mathbb{Q}_{k-1,k}$},\\
& \left<(\pi^{-}_x z)^{-}_{i,y},v\right>_{J_j}=\left<z^{-}_{i,y},v\right>_{J_j}, \quad &&\forall v\in \mathbb{P}_{k}(J_j),
\end{aligned}
\right. \\
&\left\{
\begin{aligned}
& (\pi^{-}_y z,v)_{K_{ij}}=(z,v)_{K_{ij}}, \quad &&\text{$\forall v\in \mathbb{Q}_{k,k-1}$} ,\\
& \left<(\pi^{-}_y z)^{-}_{x,j},v\right>_{I_i}=\left<z^{-}_{x,j},v\right>_{I_i}, \quad &&\forall v\in \mathbb{P}_{k}(I_i),
\end{aligned}
\right. \\
&\left\{
\begin{aligned}
& (\pi^{+}_x z,v)_{K_{ij}}=(z,v)_{K_{ij}}, \quad &&\text{$\forall v\in \mathbb{Q}_{k-1,k}$} ,\\
& \left<(\pi^{+}_x z)^{+}_{i,y},v\right>_{J_j}=\left<z^{+}_{i,y},v\right>_{J_j}, \quad &&\forall v\in \mathbb{P}_{k}(J_j),
\end{aligned}
\right. \\ 
&\left\{
\begin{aligned}
& (\pi^{+}_y z,v)_{K_{ij}}=(z,v)_{K_{ij}}, \quad &&\text{$\forall v\in \mathbb{Q}_{k,k-1}$} ,\\
& \left<(\pi^{+}_y z)^{+}_{x,j},v\right>_{I_i}=\left<z^{+}_{x,j},v\right>_{I_i}, \quad &&\forall v\in \mathbb{P}_{k}(I_i).
\end{aligned}
\right. \\ 
\end{aligned}
\end{equation*}

\end{itemize}
The proof of the existence and uniqueness of Gauss-Radau projection is referred to reference \cite{A1999-An}.

For the analysis of the article, we define the following interpolation:
\begin{equation*}
\begin{aligned}
&P^{-} u|_{K_{ij}}=
\left\{
\begin{aligned}
& \pi^{-}_x u,\quad &&\text{$i=1,\ldots,N/4,3N/4+1,\ldots,N-1,\quad j=N/4+1,\ldots,3N/4$},\\
& \pi^{-}_y u,\quad &&\text{$i=N/4+1,\ldots,3N/4, \quad j=1,\ldots,N/4,3N/4+1,\ldots,N-1 $},\\
& \pi u, \quad &&\text{otherwise},\\
\end{aligned}
\right. \\
&P^{+}_x  p|_{K_{ij}}=
\left\{
\begin{aligned}
& \pi p,\quad &&\text{$i=1,\quad j=1,\ldots,N$},\\
& \pi^{+}_x p, \quad &&\text{otherwise},\\
\end{aligned}
\right.  \\
&P^{+}_y  q|_{K_{ij}}=
\left\{
\begin{aligned}
& \pi q,\quad &&\text{$i=1,\ldots,N, \quad j=1$},\\
& \pi^{+}_y q, \quad &&\text{otherwise}.\\
\end{aligned}
\right. 
\end{aligned}
\end{equation*}

Define $||z||^2=\sum_{i=1}^N\sum_{j=1}^N||z||_{K_{ij}}^2$ and $||z||^2_{K_{ij}}=(z,z)_{K_{ij}}$. Next, we will give the interpolation error estimates for $u$ and $p$, and  note that the interpolation error estimates for $q$ are similar to the ones for $p$.
\begin{lemma}\label{lem4}
\citep[Lemma 4.2 and Lemma 4.4]{m2021-local} On the two-dimensional layer-adapted meshes $\{(x_i
, y_j ), i,j=1,\ldots,N\}$ with $\sigma \ge k+1$, we have 
\begin{align}
&||u-P^{-} u||\le C\left[\varepsilon^{1/4}(N^{-1}\ln N)^{k+1}+N^{-(k+1)}\right],\label{solution2:2}\\
&||q-P^{+}_x p||\le C\varepsilon^{3/4}(N^{-1}\ln N)^{k+1},\label{solution2:3}\\
&||u-P^{-} u||_{L^{\infty}(\Omega)}\le CN^{-(k+1)},\label{solution2:4}\\
&||q-P^{+}_x p||_{L^{\infty}(\Omega)}\le C\varepsilon^{1/2}N^{-(k+1)}.\label{solution2:5}
\end{align}
\end{lemma}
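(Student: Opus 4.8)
The plan is to reduce the two–dimensional estimates to the one–dimensional ones already recorded in Lemma~\ref{lem2}, exploiting the tensor–product structure of both the mesh and the projections. First I would invoke the decomposition of Lemma~\ref{lem3}, $u=S+\sum_{k=1}^4 W_k+\sum_{k=1}^4 Z_k$, and treat the smooth part $S$, the edge layers $W_k$, and the corner layers $Z_k$ separately, with the corresponding splitting for $p=\varepsilon u_x$. The crucial observation is that every building–block projection factors as a tensor product of the one–dimensional operators: $\pi=\pi_{(x)}\otimes\pi_{(y)}$, $\pi^{-}_x=\pi^{-}_{(x)}\otimes\pi_{(y)}$, $\pi^{-}_y=\pi_{(x)}\otimes\pi^{-}_{(y)}$, and similarly for the $+$ variants, where $\pi_{(\cdot)}$ and $\pi^{\pm}_{(\cdot)}$ are the one–dimensional local $L^2$ and Gauss–Radau projections acting in a single variable. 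Writing $\Pi_x=\pi^{\pm}_{(x)}\otimes I$ and $\Pi_y=I\otimes\pi^{\pm}_{(y)}$, on any cell one has the commuting splitting $z-\Pi_x\Pi_y z=(I-\Pi_x)z+\Pi_x(I-\Pi_y)z$, so that each two–dimensional interpolation error is controlled by one–dimensional errors in each coordinate direction, provided the one–dimensional projections are $L^2$- and $L^\infty$-stable uniformly in $\varepsilon$, which they are.

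Then on each cell $K_{ij}$ I would apply the resulting anisotropic bound $\|z-\Pi z\|_{K_{ij}}\le C\,h_i^{k+1}\|\partial_x^{k+1}z\|_{K_{ij}}+C\,h_j^{k+1}\|\partial_y^{k+1}z\|_{K_{ij}}$ and sum the squared contributions over the three kinds of subregions of the Shishkin mesh: the coarse interior ($h_i,h_j\sim N^{-1}$), the single–layer strips (fine, $\sim\sqrt\varepsilon N^{-1}\ln N$, in one direction and coarse in the other), and the corner squares (fine in both directions). Inserting the derivative bounds of Lemma~\ref{lem3} gives the claimed orders; for the edge layer $W_1\sim e^{-\beta x/\sqrt\varepsilon}$ with $|\partial_x^{k+1}W_1|\le C\varepsilon^{-(k+1)/2}e^{-\beta x/\sqrt\varepsilon}$ one finds $h_i^{2(k+1)}\!\int|\partial_x^{k+1}W_1|^2\le \varepsilon^{k+1}(N^{-1}\ln N)^{2(k+1)}\varepsilon^{-(k+1)}\!\int_0^\tau e^{-2\beta x/\sqrt\varepsilon}\,dx\le C\sqrt\varepsilon\,(N^{-1}\ln N)^{2(k+1)}$, which upon taking square roots yields the term $\varepsilon^{1/4}(N^{-1}\ln N)^{k+1}$ of \eqref{solution2:2}, while $S$ supplies the $N^{-(k+1)}$ term. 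The flux estimates \eqref{solution2:3} and \eqref{solution2:5} follow identically, the only change being that $p=\varepsilon u_x$ carries an extra factor $\sqrt\varepsilon$ in its layer part ($\partial_x^{k+1}p_{\mathrm{layer}}\sim\varepsilon^{-k/2}e^{-\beta x/\sqrt\varepsilon}$), which upgrades $\varepsilon^{1/4}$ to $\varepsilon^{3/4}$. The choice $\sigma\ge k+1$ enters through $e^{-\beta\tau/\sqrt\varepsilon}=N^{-\sigma}\le N^{-(k+1)}$, so the layers are negligible on the coarse region; the $L^\infty$ bounds \eqref{solution2:4}–\eqref{solution2:5} then follow from cellwise sup approximation together with the uniform $L^\infty$-stability of the one–dimensional projections.

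The main obstacle is the corner layers $Z_k$, whose representative $Z_1\sim e^{-\beta(x+y)/\sqrt\varepsilon}$ decays in both variables and is resolved only on the corner squares, where the interpolant is $\pi$ and the two–directional splitting generates contributions against a doubly exponential weight; these accumulate two $\varepsilon^{1/4}$-type gains, so they carry a strictly higher power of $\varepsilon$ and never dominate, but confirming this requires the careful anisotropic bookkeeping that the commuting tensor structure makes possible. A related delicate point is aligning the Gauss–Radau direction with the layer: $P^{-}$ deliberately uses $\pi^{-}_x$ on the vertical strips and $\pi^{-}_y$ on the horizontal strips so that in each single–layer region the Gauss–Radau projection acts across the layer, and one must verify region by region that the operator used there delivers the optimal one–dimensional order for the dominant layer present while the stability constant in the transverse (non–layer) direction stays independent of $\varepsilon$. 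Once the region-by-region sums are assembled, the four bounds follow; the whole argument is the tensor–product lifting of Lemma~\ref{lem2}, which is why it may be cited from \cite{m2021-local}.
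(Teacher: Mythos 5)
The first thing to note is what you are being compared against: the paper does not prove this lemma at all --- its ``proof'' is the one-line citation to \citep[Lemmas 4.2 and 4.4]{m2021-local}, just as the one-dimensional analogue (Lemma~\ref{lem2}) is imported from \cite{c2022-balanced}. Your tensor-product reconstruction --- identifying $\pi^{-}_x=\pi^{-}_{(x)}\otimes\pi_{(y)}$ and its variants, the splitting $z-\Pi_x\Pi_y z=(I-\Pi_x)z+\Pi_x(I-\Pi_y)z$, inserting the decomposition of Lemma~\ref{lem3}, and summing region by region --- is the standard route, and several of your computations are correct (the fine-region bookkeeping producing $\varepsilon^{1/4}$ for $u$ and $\varepsilon^{3/4}$ for $p$, and the observation that corner layers gain an extra power of $\varepsilon$). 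However, two steps fail as written, and they are precisely the delicate points of the cited proof.

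First, the Gauss--Radau projections are \emph{not} $L^2$-stable, contrary to your blanket stability claim: $\pi^{\pm}$ interpolate a point value (an edge trace in 2D), so the correct bound is of the form $\|\pi^{+}z\|_{L^2(I)}\le C\bigl(\|z\|_{L^2(I)}+h^{1/2}|z(a^{+})|\bigr)$, $a$ the left endpoint. This trace term is exactly what makes ``the layers are negligible on the coarse region'' a real issue rather than a throwaway remark. Consider \eqref{solution2:3} on the first coarse column to the right of the transition point: the layer part of $p$ still has edge value of size $\sqrt{\varepsilon}N^{-\sigma}$ there, $P^{+}_x$ uses $\pi^{+}_x$ on those cells (only the column $i=1$ uses $\pi$), and since the cell width is $\sim N^{-1}\gg\sqrt{\varepsilon}$, the projection --- already for $k=1$, where it is the linear function matching that edge value and the (negligible) cell average --- has $L^2$ norm $\sim\sqrt{\varepsilon}\,N^{-1/2}N^{-\sigma}$ on that cell. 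For fixed $N$ this scales like $\varepsilon^{1/2}$, whereas the claimed bound scales like $\varepsilon^{3/4}$, so stability-plus-$N^{-\sigma}$-smallness cannot close this step; the coarse-region tails under Gauss--Radau projection must be confronted head on (and this is where the regime $\varepsilon\le CN^{-4}$ emphasized in the paper's remarks is the hardest, not the easiest, case).

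Second, the sup-norm bounds. Your cellwise sup-approximation argument gives, on the fine cells, $h_i^{k+1}\|\partial_x^{k+1}u\|_{L^{\infty}}\sim(\sqrt{\varepsilon}N^{-1}\ln N)^{k+1}\varepsilon^{-(k+1)/2}=(N^{-1}\ln N)^{k+1}$, i.e.\ with the logarithm, and no refinement can remove it: on the first cell, of width $4\sigma\sqrt{\varepsilon}\ln N/(\beta N)$, the $(k+1)$-st derivative of the layer component is bounded below by $c\,\varepsilon^{-(k+1)/2}$, so even the best degree-$k$ approximation has sup-norm error $\ge c\,(N^{-1}\ln N)^{k+1}$. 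Hence what your sketch can deliver is $\|u-P^{-}u\|_{L^{\infty}(\Omega)}\le C(N^{-1}\ln N)^{k+1}$ and $\|p-P^{+}_xp\|_{L^{\infty}(\Omega)}\le C\varepsilon^{1/2}(N^{-1}\ln N)^{k+1}$, not the log-free bounds \eqref{solution2:4}--\eqref{solution2:5} as stated; asserting that these ``then follow from cellwise sup approximation'' glosses over a claim your method provably cannot reach. (The log-laden versions would still suffice for Theorem~\ref{theorem2} up to extra logarithmic factors, but that is a repair you would need to state, not a detail.)
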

\begin{proof}
The proofs of \eqref{solution2:2}, \eqref{solution2:3}, \eqref{solution2:4} and \eqref{solution2:5} refer to \citep[Lemma 4.2 and Lemma 4.4]{m2021-local}.
\end{proof}

%
%
%

\subsection{Convergence analysis}
Recall $t=(u,p,q)$ to be the exact solution of the problem \eqref{eq3.1}.
From \eqref{LDG2:2}, we define an energy norm:
\begin{align}\label{norm2:1}
|||t|||^2_{E}=B(t;t)&= \varepsilon^{-1}||p||^2+\varepsilon^{-1}||q||^2+||b(x)^{\frac{1}{2}}u||^2\\ \nonumber
             &+\sum^{N}_{j=1}\sum_{i\in\{0,N\}}(\lambda_{i,y},[[u]]_{i,y}^2)_{J_j}
+\sum^{N}_{i=1}\sum_{j\in\{0,N\}}(\lambda_{x,j},[[u]]_{x,j}^2)_{I_i}\\ \nonumber
             &+\sum_{j=1}^N(\lambda_P,[[p]]^2_{{\frac{3N}{4}},y})_{J_j}+\sum_{i=1}^N(\lambda_Q,[[q]]^2_{x,{\frac{3N}{4}}})_{I_i}.
\end{align}
And the balanced norm is defined as:
\begin{align}\label{norm2:2}
|||t|||^2_{B}&= \varepsilon^{-\frac{3}{2}}||p||^2+\varepsilon^{-\frac{3}{2}}||q||^2+||b(x)^{\frac{1}{2}}u||^2+\sum^{N}_{j=1}\sum_{i\in\{0,N\}}(1,[[u]]_{i,y}^2)_{J_j}\\ \nonumber
&+\sum^{N}_{i=1}\sum_{j\in\{0,N\}}(1,[[u]]_{x,j}^2)_{I_i}+\sum_{j=1}^N(\lambda_P,[[p]]^2_{{\frac{3N}{4}},y})_{J_j}+\sum_{i=1}^N(\lambda_Q,[[q]]^2_{x,{\frac{3N}{4}}})_{I_i}.
\end{align}

\begin{theorem}\label{theorem2}
Assume $\varepsilon \le CN^{-1}$.
On the two-dimensional layer-adapted meshes $\{(x_i, y_j ), i,j=1,\ldots,N\}$, we take $\sigma \ge k+1$. Let $T=(U,P,Q)\in R_N \times R_N\times R_N$ be the numerical solution of the LDG scheme \eqref{LDG2:1}. Let $t=(u,p,q)$ be the exact solution of the problem \eqref{eq3.1} satisfying Lemma \ref{lem3}. Define ${e=t-T}$. Besides, there is a constant $C>0$ such that
\begin{align*}
&|||e|||_{E}\le  C\left(\varepsilon^{1/4}(N^{-1}\ln N)^{k+1}+\varepsilon^{1/2}N^{-k}+N^{-(k+1)}\right),\\
&|||e|||_{B}\le C\left(\varepsilon^{1/2}(N^{-1}\ln N)^{k+1}+\varepsilon^{1/4}N^{-k}+(N^{-1}\ln N)^{k+1}\right).
\end{align*}
\end{theorem}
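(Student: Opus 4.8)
The plan is to follow, essentially verbatim, the architecture of the proof of Theorem \ref{theorem1}, now carried out for the three-field system. First I would split the error with the interpolations defined above, writing $e=t-T=\eta-\xi$ where $\eta=(\eta_u,\eta_p,\eta_q)=(u-P^{-}u,\,p-P^{+}_x p,\,q-P^{+}_y q)$ and $\xi=(\xi_u,\xi_p,\xi_q)=(U-P^{-}u,\,P-P^{+}_x p,\,Q-P^{+}_y q)\in R_N\times R_N\times R_N$. Consistency of the numerical fluxes and the regularity of Lemma \ref{lem3} give the Galerkin orthogonality $B(\xi;Z)=B(\eta;Z)$ for all $Z\in R_N^3$. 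Taking $Z=\xi$, the first task is the coercivity identity $B(\xi;\xi)=|||\xi|||_{E}^2$ of \eqref{norm2:1}: cell-wise integration by parts in both $x$ and $y$ makes the skew-symmetric couplings between $U$ and $(P,Q)$ telescope across the vertical and horizontal interfaces, and one checks that the penalty contributions of the two new fluxes at $x=\tfrac34 N$ and $y=\tfrac34 N$ reproduce exactly the positive terms $\sum_j(\lambda_P,[[\xi_p]]^2_{3N/4,y})_{J_j}$ and $\sum_i(\lambda_Q,[[\xi_q]]^2_{x,3N/4})_{I_i}$, while the boundary penalties give the $[[\xi_u]]^2$ terms.

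Next I would expand $B(\eta;\xi)$ via \eqref{LDG2:2} into a volume group, an interface group and a stabilization group, forming an $x$-family and a $y$-family of terms in complete analogy with $S_1,\dots,S_8$. The two \emph{aligned} volume couplings $(\eta_p,(\xi_u)_x)$ and $(\eta_q,(\xi_u)_y)$ vanish identically, because $P^{+}_x p$ is built from $\pi^{+}_x$ (orthogonal to $\mathbb{Q}_{k-1,k}$) while $(\xi_u)_x\in\mathbb{Q}_{k-1,k}$, and symmetrically $P^{+}_y q$ uses $\pi^{+}_y$ against $(\xi_u)_y\in\mathbb{Q}_{k,k-1}$; on the corner and central cells, where $\pi$ is used, full $\mathbb{Q}_k$-orthogonality makes them vanish as well. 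The terms $\varepsilon^{-1}(\eta_p,\xi_p)$ and $\varepsilon^{-1}(\eta_q,\xi_q)$ are treated as $S_1$ using \eqref{solution2:3}; the reaction term $(b\eta_u,\xi_u)$ and the $\eta_p,\eta_q$ interface terms collapse, through the Gauss--Radau edge matching, to the boundary edges $i\in\{0,N\}$ and $j\in\{0,N\}$ and are bounded by the $L^{\infty}$ estimates \eqref{solution2:4}, \eqref{solution2:5} together with the layer-strip measure and $\lambda=\sqrt{\varepsilon}$, exactly as $S_4,S_5,S_8$; finally the $U$-interface sums $\langle(\eta_u)^{-}_{i,y},[[\xi_p]]_{i,y}\rangle$ and $\langle(\eta_u)^{-}_{x,j},[[\xi_q]]_{x,j}\rangle$ reduce to the central and transition cells and are controlled by the trace inequality, \eqref{solution2:4} and the penalties $\lambda_P=\lambda_Q=\varepsilon^{-1/2}$, mirroring $S_6,S_7$.

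The genuinely new obstacle, which has no one-dimensional counterpart, is the pair of \emph{cross} volume couplings $(\eta_u,(\xi_q)_y)$ and $(\eta_u,(\xi_p)_x)$. On the $x$-layer cells $P^{-}u=\pi^{-}_x u$ is orthogonal only to $\mathbb{Q}_{k-1,k}$, whereas $(\xi_q)_y\in\mathbb{Q}_{k,k-1}$, so the first coupling is not annihilated there; symmetrically $(\eta_u,(\xi_p)_x)$ survives on the $y$-layer cells where $\pi^{-}_y$ is used. I would estimate these by noting that in each single-layer strip the off-direction flux is $O(\varepsilon)$, and more concretely by combining the $L^{\infty}$ bound \eqref{solution2:4} on $\eta_u$ with an inverse inequality on $(\xi_q)_y$ (respectively $(\xi_p)_x$) and the small area $O(\sqrt{\varepsilon}\ln N)$ of the strip, so that the surviving factor is absorbed by $\varepsilon^{-1/2}\|\xi_q\|\le|||\xi|||_{E}$ (respectively $\varepsilon^{-1/2}\|\xi_p\|$) under the hypothesis $\varepsilon\le CN^{-1}$. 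Verifying that this does not degrade the final order, and that the coercivity identity indeed survives the tensor-product fluxes, is where I expect the real work to lie.

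Collecting the bounds gives $B(\eta;\xi)\le C\left(\varepsilon^{1/4}(N^{-1}\ln N)^{k+1}+\varepsilon^{1/2}N^{-k}\right)|||\xi|||_{E}$, hence by the coercivity identity $|||\xi|||_{E}\le C\left(\varepsilon^{1/4}(N^{-1}\ln N)^{k+1}+\varepsilon^{1/2}N^{-k}\right)$. Comparing \eqref{norm2:1} with \eqref{norm2:2} term by term, the weights differ by at most a factor $\varepsilon^{-1/2}$ on the flux and boundary-jump contributions, so $|||\xi|||_{B}\le C\varepsilon^{-1/4}|||\xi|||_{E}$. The interpolation parts $|||\eta|||_{E}$ and $|||\eta|||_{B}$ are bounded directly from Lemma \ref{lem4}, reconstructing the boundary-edge and $3N/4$ jump contributions from the $L^{\infty}$ estimates \eqref{solution2:4}, \eqref{solution2:5}. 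A final application of the triangle inequality $|||e|||\le|||\eta|||+|||\xi|||$ in both norms yields the two asserted estimates, completing the proof.
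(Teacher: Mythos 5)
Your proposal follows the paper's proof line for line in its architecture --- the splitting $e=\eta-\xi$ with the same interpolants, the Galerkin orthogonality \eqref{norm2:3}, the coercivity identity $B(\xi;\xi)=|||\xi|||_E^2$, term-by-term estimation of $B(\eta;\xi)$, the comparison $|||\xi|||_B\le C\varepsilon^{-1/4}|||\xi|||_E$, Lemma \ref{lem4} for the $\eta$-part, and a final triangle inequality --- but it departs from the paper at exactly one step, and there your version is the correct one. The paper kills both volume couplings at once, asserting $S_3=(\eta_u,\xi_{p,x})=0$ and $S_4=(\eta_p,\xi_{u,x})=0$ ``owing to the orthogonality of the projection.'' That is valid for the aligned terms: on every cell $P^{+}_x p$ equals $\pi^{+}_x p$ or $\pi p$, both orthogonal to $\mathbb{Q}_{k-1,k}\ni \xi_{u,x}$, so $S_4=0$, and similarly $(\eta_q,\xi_{u,y})=0$. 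But for the cross terms it fails precisely where you locate the obstacle: on the cells with $i\in\{N/4+1,\dots,3N/4\}$ and $j$ in the layer index ranges, $P^{-}u=\pi^{-}_y u$ is orthogonal only to $\mathbb{Q}_{k,k-1}$, whereas $\xi_{p,x}$ carries $y$-degree $k$, so $(\eta_u,\xi_{p,x})_{K_{ij}}$ is the pairing of a one-dimensional Gauss--Radau error in $y$ against the $y^k$-modes of $\xi_{p,x}$ and does not vanish; symmetrically, $(\eta_u,\xi_{q,y})$ survives on the $x$-layer strips. Your repair is also quantitatively sufficient: using $\|\eta_u\|_{L^\infty(\Omega)}\le CN^{-(k+1)}$ from \eqref{solution2:4}, the strip measure $O(\sqrt{\varepsilon}\ln N)$, the inverse estimate $\|\xi_{p,x}\|_{K_{ij}}\le Ch_i^{-1}\|\xi_p\|_{K_{ij}}\le CN\|\xi_p\|_{K_{ij}}$ (the surviving cells are central in $x$), and $\|\xi_p\|\le \varepsilon^{1/2}|||\xi|||_E$, one obtains over those cells
\begin{equation*}
\Bigl|\sum (\eta_u,\xi_{p,x})_{K_{ij}}\Bigr| \le C\varepsilon^{3/4}(\ln N)^{1/2}N^{-k}\,|||\xi|||_E \le C\varepsilon^{1/2}N^{-k}\,|||\xi|||_E,
\end{equation*}
the last step using $\varepsilon\le CN^{-1}$; this is absorbed by the $\varepsilon^{1/2}N^{-k}$ term already present in your collected bound and in the statement of the theorem. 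So the comparison is: the paper's shortcut yields a shorter proof but rests on an orthogonality that is false on the single-layer strips, while your route spends one extra estimate and thereby closes that gap without degrading the final order. The remaining ingredients of your outline (the collapse of the $\eta_p$, $\eta_q$ edge sums and of the $S_5$-type interface sums, the penalty and boundary terms, the norm comparison and final assembly) coincide with the paper's proof step for step.
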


\begin{proof}
We use interpolation $P^{-} u$, $P^{+}_x p$ and $P^{+}_y q$ to divide the error into two parts: ${e=t-T}={\eta-\xi}$ with
\begin{equation*}
\begin{aligned}
&\bold{\eta}=(\eta_u,\eta_p,\eta_q)=(u-P^{-} u,p-P^{+}_x p, q-P^{+}_y q),\\
&\bold{\xi}=(\xi_u,\xi_p,\xi_q)=(U-P^{-} u,P-P^{+}_x p, Q-P^{+}_y q)\in R_N \times R_N\times R_N.
\end{aligned}
\end{equation*}

Owing to the consistency of numerical fluxes and the smooth assumptions of $u$, $p$ and $q$, we can obtain the error equation
\begin{equation}\label{norm2:3}
B(\xi;Z)=B(\eta;Z),\quad \forall Z=(u,p,q)\in R_N \times R_N\times R_N.
\end{equation}
Set $Z=\xi$ in \eqref{norm2:3}.  According to the integration by parts, we derive
\begin{equation}\label{norm2:4}
B(\xi;\xi)=|||\xi|||_E^2.
\end{equation}
Furthermore, by \eqref{LDG2:2}, we can get
\begin{equation*}
B(\eta;{Z})\equiv \sum_{i=1}^{15} S_i,
\end{equation*}
where
\begin{equation*}\label{eq:VN}
\begin{aligned}
&S_1=(b\eta_u,\xi_u),\quad S_2=\varepsilon^{-1}(\eta_p,\xi_p),\\
&S_3=(\eta_u,\xi _{p,x}),\quad S_4=(\eta_p,\xi_{u,x}),\\
&S_5=\sum_{j=1}^{N}\sum_{i=1}^{N-1}\left<(\eta_u)^{-}_{i,y},[[\xi_p]]_{i,y}\right>_{J_j},\\
&S_6=\sum_{j=1}^{N}\left \{ \sum_{i=0}^{N-1}\left<(\eta_p)^{+}_{i,y},[[\xi_u]]_{i,y}\right>_{J_j}-\left<(\eta_p)^{-}_{N,y},[[\xi_u]]_{N,y}\right>_{J_j} \right\},\\
&S_7=\sum_{j=1}^{N}\sum_{i\in \{0,N\}}\left<\lambda_{i,y} [[\eta_u]]_{i,y},[[\xi_u]]_{i,y}\right>_{J_j},\\
&S_8=\sum_{j=1}^{N}\left<\lambda_P[[\eta_p]]_{3N/4,y},[[\xi_p]]_{3N/4,y}\right>_{J_j}.
\end{aligned}
\end{equation*}\label{eq:VN}
Here, we only prove the convergence analysis for the part of $p$, because the remaining proof for $q$ is similar to $p$. Furthermore, we will analyze the above parts separately.

Using the Cauchy–Schwarz inequality, \eqref{solution2:2} and \eqref{solution2:3}, one has
\begin{align*}
&S_1\le C||b^{\frac{1}{2}}\eta_u||||b^{\frac{1}{2}}\xi_u||\le C\varepsilon^{1/4}(\ln N)^{1/2} N^{-(k+1)}|||\xi|||_E.\\
&S_2\le (\varepsilon^{-1/2}||\eta_p||)(\varepsilon^{-1/2}||\xi_p||)\le C\varepsilon^{1/4}(N^{-1}\ln N)^{k+1}|||\xi|||_E.
\end{align*}
Owing to the orthogonality of the projection, we have $S_3=S_4=0$.\\
Let $\Omega_c=\Omega-\left\{\left((0,x_{N/4})\times (y_{N/4},y_{3N/4})\right)\cup \left((x_{3N/4},x_{N-1})\times (y_{N/4},y_{3N/4})\right)\right\}$. From the Cauchy–Schwarz inequality, the trace inequality, \eqref{solution2:4}, $\varepsilon \le CN^{-1}$ and $\lambda_P=\varepsilon^{-1/2}$, we can get
\begin{align*}
S_5&\le C\sum_{K_{ij}\in \Omega_c,i\neq 3N/4}||(\eta_u)^{-}_{i,y}||_{k_{ij}}||[[\xi_p]]_{i,y}||_{k_{ij}}+\sum_{j=1}^{N}\left<(\eta_u)^{-}_{3N/4,y},[[\xi_p]]_{3N/4,y}\right>_{J_j}\\
&\le Ch_j^{1/2}h_j^{1/2}(h_i h_j)^{-1/2}||\eta_u||_{L^{\infty}} N ||\xi_p||+C(\sum_{j=1}^{N}\lambda_p^{-1} h_j||(\eta_u)||_{L^{\infty}(k_{ij})}^2)^{1/2}|||\xi|||_E\\
&\le C\varepsilon^{1/2}N^{-k}|||\xi|||_E+C\varepsilon^{1/4} N^{-(k+1)}|||\xi|||_E.
\end{align*}
By the Cauchy–Schwarz inequality, \eqref{solution2:5}, \eqref{solution2:4} and $\lambda_{0,y}=\lambda_{N,y}=\sqrt{\varepsilon}$, one has
\begin{align*}
&S_6\le C\varepsilon^{1/4}N^{-(k+1)}|||\xi|||_E.\\
&S_7\le C\varepsilon^{1/4}N^{-(k+1)}|||\xi|||_E.
\end{align*}
%
From the Cauchy–Schwarz inequality, $\lambda_P =\varepsilon^{-1/2}$ and \eqref{solution2:5}, we have
\begin{equation*}
\begin{aligned}
S_8
   \le C\varepsilon^{1/4}N^{-(k+1)}|||\xi|||_E.
\end{aligned}
\end{equation*}
We gather all the above results and get
\begin{equation*}
\begin{aligned}
B(\eta;\xi)\le C\varepsilon^{1/4}(N^{-1}\ln N)^{k+1}|||\xi|||_{E}.
\end{aligned}
\end{equation*}
Considering \eqref{norm2:3} and \eqref{norm2:4}, we have
\begin{equation*}
|||\xi|||^2_{E}= B(\xi;\xi)=B(\eta;\xi)\le C\varepsilon^{1/4}(N^{-1}\ln N)^{k+1}|||\xi|||_{E}.
\end{equation*}
Therefore, it is straightforward to derive
\begin{align}
&|||\xi|||_{E}\le C\varepsilon^{1/4}(N^{-1}\ln N)^{k+1}+C\varepsilon^{1/2}N^{-k},\label{norm2:5}\\
&|||\xi|||_{B}\le C\varepsilon^{-1/4}|||\xi|||_E\le C(N^{-1}\ln N)^{k+1}+C\varepsilon^{1/4}N^{-k}.\label{norm2:6}
\end{align} 
From Lemma \ref{lem4} and the definition of norms in \ref{norm2:1} and \ref {norm2:2}, we can get
\begin{align}
&|||\eta|||_{E}\le C\varepsilon^{1/4}(N^{-1}\ln N)^{k+1}+CN^{-(k+1)},\label{norm2:7}\\
&|||\eta|||_{B}\le C\varepsilon^{1/2}(N^{-1}\ln N)^{k+1}+C(N^{-1}\ln N)^{k+1}.\label{norm2:8}
\end{align} 
Combining the results of \eqref{norm2:5}, \eqref{norm2:6}, \eqref{norm2:7} and \eqref{norm2:8}, we prove Theorem \ref{theorem2}.
\end{proof}
\begin{remark}
In Theorem \ref{theorem2}, if  $\varepsilon^{} \le CN^{-4}$, then we have 
$$|||e|||_{E}\le C(N^{-1}\ln N)^{k+1},\quad
|||e|||_{B}\le C(N^{-1}\ln N)^{k+1}.$$
Similarly, we obtain optimal convergence order without the impossible condition in \cite{c2022-balanced}.
\end{remark}

\section{Numerical experiments}
In this section, we mainly do some numerical examples to verify whether Theorem \ref{theorem1} is sharp or not.
Consider the following one-dimensional problem
\begin{equation*}
-\varepsilon u''+ u=f(x)\quad \text{in $\Omega:=(0,1)$},\quad
u(0)=u(1)=0.
\end{equation*}
We choose the right-hand side $f$ such that the exact solution of the problem is 
\begin{equation*}
u=\frac{1-e^{-1/\sqrt{\varepsilon}}}{e^{-x/\sqrt{\varepsilon}}-e^{-(1-x)/\sqrt{\varepsilon}}}-\cos(\pi x).
\end{equation*}

We set $\varepsilon=10^{-4},10^{-6},\ldots,10^{-12}$; $N=32,64,\ldots,1024$ and $k=1,2,3$. On the Shishkin mesh \eqref{mesh:1}, we take $\sigma=k+1$.
 Let $e^N$ be the error of $|||e|||_B$, where
$N$ is the element number under the computation. We calculate the convergence rates
with the following formulas
\begin{equation*}
r_s=\frac{\log e^N-\log e^{2N}}{\log 2},\quad r_p=\frac{\log e^N-\log e^{2N}}{\log (2\ln N/\ln 2N)}.
\end{equation*}

Table \ref{table-W-1} shows energy norm errors and convergence rates. Balanced norm errors and convergence rates are displayed in Table \ref{table-W-2}.
The data in Table \ref{table-W-1} and Table \ref{table-W-2} imply that Theorem \ref{theorem1} is sharp. Additionally, we can conclude that the result is approximately $|||e|||_B=\varepsilon^{-1/4}|||e|||_E$ by comparing $|||e|||_E$ and $|||e|||_B$ under each $\varepsilon$.
Through observation, we find that the error result of $|||e|||_E$ and $|||e|||_B$ are mainly controlled by $\varepsilon^{1/4}(N^{-1}\ln N)^{k+1}$ and $(N^{-1}\ln N)^{k+1}$, respectively. 
The numerical experiments in 2D are not presented since they are analogous to those in 1D.
\begin{table}[H] 
\caption{balanced norm errors and convergence rates $r_s$}\label{table-W-1}
\footnotesize
\begin{tabular*}{\textwidth}{@{\extracolsep{\fill}} cccccccccccc }
\cline{1-12}
      \multirow{2}{*}{}     &\multirow{2}{*}{ }   &\multicolumn{2}{c}{$\varepsilon=10^{-4}$} &\multicolumn{2}{c}{$\varepsilon=10^{-6}$}  &\multicolumn{2}{c}{$\varepsilon=10^{-8}$}   
&\multicolumn{2}{c}{$\varepsilon=10^{-10}$} &\multicolumn{2}{c}{$\varepsilon=10^{-12}$}   \\
 
\cline{3-12}
$k$ &$N$&$|||e|||_B$&$r_s$&$|||e|||_B$&$r_s$&$|||e|||_B$&$r_s$&$|||e|||_B$&$r_s$&$|||e|||_B$&$r_s$\\
\cline{1-12}

$1$       & $32$       & 0.25E+0  & 1.15    & 0.25E+0  & 1.15  & 0.25E+0   & 1.15  & 0.25E+0   & 1.15  &0.25E+0  &1.15\\
$ $       &$64$        & 0.11E+0  & 1.34    & 0.11E+0  & 1.34  & 0.11E+0   & 1.34  & 0.11E+0   & 1.34  &0.11E+0  &1.34\\
$ $       &$128$       & 0.44E-1  & 1.48    & 0.44E-1  & 1.48  & 0.44E-1   & 1.48  & 0.44E-1   & 1.48  &0.44E-1  &1.48\\
$ $       &$256$       & 0.16E-1  & 1.58    & 0.16E-1  & 1.58  & 0.16E-1   & 1.58  & 0.16E-1   & 1.58  &0.16E-1  &1.58\\
$ $       &$512$       & 0.53E-2  & 1.65    & 0.53E-2  & 1.65  & 0.53E-2   & 1.65  & 0.53E-2   & 1.65  &0.53E-2  &1.65\\
$ $       &$1024$      & 0.17E-2  & ---    & 0.17E-2  & ---  & 0.17E-2   & ---  & 0.17E-2   & ---  &0.17E-2  &---\\

$2$       & $32$       & 0.61E-1  & 1.78    & 0.61E-1  & 1.78  & 0.61E-1   & 1.78  & 0.61E-1   & 1.78  &0.61E-1  &1.78\\
$ $       &$64$        & 0.18E-1  & 2.05    & 0.18E-1  & 2.05  & 0.18E-1   & 2.05  & 0.18E-1   & 2.05  &0.18E-1  &2.05\\
$ $       &$128$       & 0.43E-2  & 2.25    & 0.43E-2  & 2.25  & 0.43E-2   & 2.25  & 0.43E-2   & 2.25  &0.43E-2  &2.25\\
$ $       &$256$       & 0.90E-3  & 2.39    & 0.90E-3  & 2.39  & 0.90E-3   & 2.39  & 0.90E-3   & 2.39  &0.90E-3  &2.39\\
$ $       &$512$       & 0.17E-3  & 2.48    & 0.17E-3  & 2.48  & 0.17E-3   & 2.48  & 0.17E-3   & 2.48 &0.17E-3  &2.48\\
$ $       &$1024$      & 0.31E-4  & ---    & 0.31E-4  & ---  & 0.31E-4   & ---  & 0.31E-4   & ---  &0.31E-4  &---\\

$3$       & $32$       & 0.15E-1  & 2.41    & 0.15E-1  & 2.41  & 0.15E-1   & 2.41  & 0.15E-1   & 2.41 
& 0.15E-1   & 2.41\\
$ $       &$64$        & 0.28E-2  & 2.76    & 0.28E-2  & 2.76  & 0.28E-2   & 2.76  & 0.28E-2   & 2.76
& 0.28E-2   & 2.76\\
$ $       &$128$       & 0.41E-3  & 3.01    & 0.41E-3  & 3.01  & 0.41E-3   & 3.01  & 0.41E-3   & 3.01 &0.41E-3  & 3.01\\
$ $       &$256$       & 0.51E-4  & 3.19    & 0.51E-4  & 3.19  & 0.51E-4   & 3.19  & 0.51E-4   & 3.19   &0.51E-4  & 3.19\\
$ $       &$512$       & 0.55E-5  & ---    & 0.55E-5  & ---  & 0.55E-5   & ---  & 0.55E-5   &---  &0.55E-5  &---\\

\cline{1-12}
\end{tabular*}
\label{table:2}
\end{table}
%

\begin{table}[H] 
\caption{balanced norm errors and convergence rates $r_p$}\label{table-W-2}
\footnotesize
\begin{tabular*}{\textwidth}{@{\extracolsep{\fill}} cccccccccccc }
\cline{1-12}
      \multirow{2}{*}{}     &\multirow{2}{*}{ }   &\multicolumn{2}{c}{$\varepsilon=10^{-4}$} &\multicolumn{2}{c}{$\varepsilon=10^{-6}$}  &\multicolumn{2}{c}{$\varepsilon=10^{-8}$}   
&\multicolumn{2}{c}{$\varepsilon=10^{-10}$} &\multicolumn{2}{c}{$\varepsilon=10^{-12}$}   \\
 
\cline{3-12}
$k$ &$N$&$|||e|||_B$&$r_p$&$|||e|||_B$&$r_p$&$|||e|||_B$&$r_p$&$|||e|||_B$&$r_p$&$|||e|||_B$&$r_p$\\
\cline{1-12}

$1$       & $32$       & 0.25E+0  & 1.56    & 0.25E+0  & 1.56  & 0.25E+0   & 1.56  & 0.25E+0   & 1.56  &0.25E+0  &1.56\\
$ $       &$64$        & 0.11E+0  & 1.72    & 0.11E+0  & 1.72  & 0.11E+0   & 1.72  & 0.11E+0   & 1.72  &0.11E+0  &1.72\\
$ $       &$128$       & 0.44E-1  & 1.83    & 0.44E-1  & 1.83  & 0.44E-1   & 1.83  & 0.44E-1   & 1.83  &0.44E-1  &1.83\\
$ $       &$256$       & 0.16E-1  & 1.90    & 0.16E-1  & 1.90  & 0.16E-1   & 1.90  & 0.16E-1   & 1.90  &0.16E-1  &1.90\\
$ $       &$512$       & 0.53E-2  & 1.95    & 0.53E-2  & 1.95  & 0.53E-2   & 1.95  & 0.53E-2   & 1.95  &0.53E-2  &1.95\\
$ $       &$1024$      & 0.17E-2  & ---    & 0.17E-2  & ---  & 0.17E-2   & ---  & 0.17E-2   & ---  &0.17E-2  &---\\

$2$       & $32$       & 0.61E-1  & 2.42    & 0.61E-1  & 2.42  & 0.61E-1   & 2.42  & 0.61E-1   & 2.42 &0.61E-1  &2.42\\
$ $       &$64$        & 0.18E-1  & 2.64    & 0.18E-1  & 2.64  & 0.18E-1   & 2.64  & 0.18E-1   & 2.64  &0.18E-1  &2.64\\
$ $       &$128$       & 0.43E-2  & 2.79    & 0.43E-2  & 2.79  & 0.43E-2   & 2.79  & 0.43E-2   & 2.79  &0.43E-2  &2.79\\
$ $       &$256$       & 0.90E-3  & 2.88    & 0.90E-3  & 2.88  & 0.90E-3   & 2.88  & 0.90E-3   & 2.88  &0.90E-3  &2.88\\
$ $       &$512$       & 0.17E-3  & 2.92    & 0.17E-3  & 2.92  & 0.17E-3   & 2.92  & 0.17E-3   & 2.92 &0.17E-3  &2.92\\
$ $       &$1024$      & 0.31E-4  & ---    & 0.31E-4  & ---  & 0.31E-4   & ---  & 0.31E-4   & ---  &0.31E-4  &---\\

$3$       & $32$       & 0.15E-1  & 3.27    & 0.15E-1  & 3.27  & 0.15E-1   & 3.27  & 0.15E-1   & 3.27 
& 0.15E-1   &3.27\\
$ $       &$64$        & 0.28E-2  & 3.55    & 0.28E-2  & 3.55  & 0.28E-2   & 3.55  & 0.28E-2   & 3.55
& 0.28E-2   & 3.55\\
$ $       &$128$       & 0.41E-3  & 3.73    & 0.41E-3  & 3.73  & 0.41E-3   & 3.73  & 0.41E-3   & 3.73 &0.41E-3  & 3.73\\
$ $       &$256$       & 0.51E-4  & 3.84    & 0.51E-4  & 3.84  & 0.51E-4   & 3.84  & 0.51E-4   & 3.84   &0.51E-4  & 3.84\\
$ $       &$512$       & 0.55E-5  & ---    & 0.55E-5  & ---  & 0.55E-5   & ---  & 0.55E-5   &---  &0.55E-5  &---\\

\cline{1-12}
\end{tabular*}
\label{table:2}
\end{table}

\bibliographystyle{plain}


\end{document}